\newtheorem{theorem}{Theorem}
\newtheorem{proposition}[theorem]{Proposition}
\newtheorem{lemma}[theorem]{Lemma}
\newtheorem{assumption}[theorem]{Assumption}
\newtheorem*{assumption*}{Assumption}
\theoremstyle{definition}\newtheorem{remark}[theorem]{Remark}
\newcommand{\R}{\mathbb{R}}
\newcommand{\m}{\mathfrak{m}}
\newcommand{\twp}{\hat{u}}
\title{$L^2$-stability of Traveling Wave Solutions to Nonlocal Evolution Equations}
\author{Eva Lang \footnotemark[1]\ \footnotemark[2]\ \footnotemark[3] \and Wilhelm Stannat \footnotemark[1]\ \footnotemark[2]}
\begin{document}

\maketitle
\renewcommand{\thefootnote}{\fnsymbol{footnote}}
\footnotetext[1]{Institut f\"ur Mathematik, Technische Universit\"at Berlin, D-10623 Berlin, Germany (lang@math.tu-berlin.de, stannat@math.tu-berlin.de)}
\footnotetext[2]{Bernstein Center for Computational Neuroscience, D-10115 Berlin, Germany. The work was supported by the BMBF, FKZ01GQ1001B.}
\footnotetext[3]{The work of this author was supported by the DFG RTG 1845.}
\renewcommand{\thefootnote}{\arabic{footnote}}

\begin{abstract}
\noindent
Stability of the traveling wave solution to a general class of one-dimensional nonlocal evolution equations is studied in $L^2$-spaces, thereby providing an alternative approach to the usual spectral analysis with respect to the supremum norm. We prove that the linearization around the traveling wave solution satisfies a Lyapunov-type stability condition in a weighted space $L^2(\rho)$ for a naturally associated density $\rho$. The result can be applied to obtain stability of the traveling wave solution under stochastic perturbations of additive or multiplicative type. For small wave speeds, we also prove an alternative Lyapunov-type stability condition in $L^2(\m)$, where $\m$ is the symmetrizing density for the traveling wave operator, which allows to derive a long-term stochastic stability result.
\end{abstract}

\section{Introduction}
Consider the nonlocal evolution equation 
\begin{equation}
\label{eq:evoleq}
\partial_t u(x,t) = d \partial_{xx} u(x,t) + S(u, w\ast g(u))(x,t),
\end{equation}
for $x\in \R$ and $t\geq 0$. Here, $d\geq 0$, $g\in C^1(\R)$ is strictly increasing, $S(x,g)\in C^1(\R\times\R)$ is strictly increasing in $g$, $w$ is a probability density that is differentiable almost everywhere with $\int |w_x|\, dx < \infty$ and $w\ast h (x) := \int w(x-y)h(y)\, dy$ denotes convolution. In order to ensure the existence of monotone traveling wave solutions, we suppose that $x\mapsto S(x, g(x))$ is bistable: there exist exactly three zeroes $a_1<a<a_2$ such that $S(a_i, g(a_i))=S(a,g(a))=0$, $\frac{d}{dx} S(a_i, g(a_i)) < 0$, $i=1,2$, and $\frac{d}{dx} S(a, g(a))>0$. 

\smallskip 
\noindent 
A strictly monotone traveling wave solution to \eqref{eq:evoleq} connecting the stable states is a solution of the form 
$$ 
u^{TW} (x,t) = \twp (x- ct) 
$$ 
for some wave profile $\twp\in C^1(\R)$ with $\twp_x >0$, $\lim_{x\rightarrow -\infty} \twp(x) = a_1$, $\lim_{x\rightarrow \infty} \twp(x) = a_2$, and some wave speed $c\in\R$. Inserting $u^{TW}$ into the equation \eqref{eq:evoleq} implies that $\twp$ satisfies the equation 
\begin{equation} 
\label{eq:twequation} 
-c\partial_x\twp = d\partial_{xx} \twp + S(\twp, w\ast g(\twp ))\, . 
\end{equation} 

\medskip 
\noindent 
The main example we have in mind is the 
\begin{itemize}
\item \emph{Neural Field Equation.}
\begin{equation}
\label{eq:nfe}
\partial_t u(x,t) = -u(x,t) + w\ast F(u(\cdot,t))(x),
\end{equation}
where $F$ is a sigmoid function. Equation \eqref{eq:nfe} has been introduced by S. Amari in \cite{amari} to study pattern formation in homogeneous recurrent neural networks and has since then been intensively studied in the (computational) neuroscience literature. Existence and uniqueness (up to spatial translation) of monotone traveling wave solutions was first proven in \cite{ermentroutmcleod}. 
\end{itemize}

\bigskip 
\noindent 
A first result on (exponential) stability of traveling wave solutions to nonlocal evolution equations of type \eqref{eq:evoleq} has been obtained in \cite{demasigobronpresutti} w.r.t. the sup-norm for the continuum limit of the 1-dimensional  
\begin{itemize}
\item \emph{Ising Model.} 
\[\partial_t u(t) = \tanh(\beta(w\ast u(t)+h))-u(t),\]
where $\beta >1$, $0\leq w\in \mathcal{C}^2$ is even and supported on $[-1,1]$, 
for small $h\ge 0$. The result has then been extended to general $h\ge 0$ in \cite{orlanditriolo}.
\end{itemize} 
In \cite{chenxinfu}, Chen proved existence, uniqueness, and exponential stability (again w.r.t. the sup-norm) of monotone traveling wave solutions to (\ref{eq:evoleq}) for a large class of evolution equations of the form (\ref{eq:evoleq}). Apart from the neural field equation and the Ising model, his results also cover the following examples: 
\begin{itemize}
\item \emph{Convolution Model for Phase Transitions.} 
\[\partial_t u(t) = \lambda w\ast u(t)-u(t)+f(u(t)),\]
where $\lambda>0$, $0\leq w \in \mathcal{C}^1$ is even 
and $f$ is bistable. Existence and Uniqueness of a monotone traveling front is established in \cite{batesfiferenwang}.
\item \emph{Thalamic Model.}
\[\partial_t u(t) = - u(t) + h (1-u(t)) F(w\ast(u^p(t))-\Theta),\]
where $h, \Theta>0$, $p \in {1,2,3,4}$, $w(x) = \frac{1}{2\sigma}e^{-\frac{|x|}{\sigma}}$ for some $\sigma>0$, and $F$ is a sigmoid function. 
\item The above example is included as a special case of nonlocal evolution equations of the form
\[\partial_t u(t) = r(u(t))+p(u(t))S(w\ast q(u(t)))\]
considered in \cite{chenermentroutmcleod}, where existence and uniqueness of monotone traveling waves is shown. We refer to \cite{chenermentroutmcleod} for the precise assumptions on the parameters.
\end{itemize}

\smallskip 
\noindent 
In all of the work cited above, stability of the traveling wave solution to (\ref{eq:evoleq}) is established in $L^{\infty}(\R)$ or $C_0(\R)$, the space of continuous functions vanishing at infinity. In \cite{bateschen}, Bates and Chen prove that the linear operator appearing in the equation when linearizing around the traveling wave solution has a spectral gap in $C_0$. More recently, reference \cite{zhaoruan} establishes spectral properties of traveling wave solutions to nonlocal evolution equations of type \eqref{eq:evoleq} in $L^p$, $1 \leq p \leq \infty$. 

\medskip 
\noindent
In this paper we will be concerned with the stability of traveling wave solutions in $L^2$-spaces. More precisely, considering equation \eqref{eq:evoleq} in moving frame coordinates $u^\# (x,t) =  u(x+ct,t)$, turning the traveling wave into a standing wave, yields the nonlocal evolution equation 
\begin{equation}
\label{eq:frozenwave}
\partial_t u^{\#}(x,t) = c \partial_x u^{\#}(x,t) + d\partial_{xx} u^{\#}(x,t) + S(u^{\#}, w\ast g(u^{\#}))(x,t).
\end{equation}
and $\twp$ (and its spatial translates) become stationary solutions to \eqref{eq:frozenwave}. Linearizing the right hand side around $\twp$ yields the frozen wave operator 
\[L^{\#} v = \partial_1 S(\twp, w\ast g(\twp)) v + c\partial_x v + d\partial_{xx} v +  \partial_2 S(\twp, w\ast g(\twp)) w\ast (g'(\twp)v).\]
Differentiating equation \eqref{eq:twequation} w.r.t.~$x$ yields that $L^{\#}\twp_x=0$, hence $0$ is 
an eigenvalue of the linear operator $L^\#$, and we say that $\twp_x$ is spectrally stable in $L^2$ if $\sup\{\mbox{Re} (\lambda ) : \lambda\in\mathbb C\setminus {\mathcal R} (L^\# )\} \setminus \{0\} < 0$, where 
${\mathcal R}(L^\# )$ denotes the resolvent set, i.e.~the set of all $\lambda\in\mathbb C$ for which the operator $\lambda - L^\#$ is invertible in $L^2$ with bounded inverse. It is well-known that this condition is implied by the stronger Lyapunov-type stability: there exist $\kappa, Z >0$ such that
\begin{equation}
\label{eq:spectralgapintuition}
\langle L^{\#} v, v\rangle \leq -\kappa \|v\|^2 + Z \langle v, \twp_x\rangle^2\, . 
\end{equation}
The geometric interpretation of \eqref{eq:spectralgapintuition} is that $L^2$-solutions of 
$\partial_t v (x,t) = L^\# v(x,t)$ will decay exponentially in directions orthogonal to the 
eigenspace generated by $\twp_x$ (and its spatial translates), whereas they will not decay in directions tangential to the traveling wave solutions. 

\smallskip 
\noindent 
The operator $L^\#$ is not symmetric w.r.t.~the inner product on the space $L^2$, so that spectral stability does not imply \eqref{eq:spectralgapintuition}. On the other hand, \eqref{eq:spectralgapintuition} is more robust and allows to study stability of \eqref{eq:evoleq} 
and stochastic perturbations with additive or multiplicative noise in the phase space with the direct Lyapunov method. Assumption \eqref{eq:spectralgapintuition} has been made in \cite{kruegerstannat} (in the neural field example) to study the behavior of the traveling wave solution under noise. In \cite{inglismaclaurin}, the stability and long-term behavior of traveling waves under noise are studied in a general setting under the assumption of spectral $L^2$-stability. However, it seems that condition \eqref{eq:spectralgapintuition} is difficult to verify in particular examples, in particular for large wave speeds $c$. 

\smallskip 
\noindent 
We will therefore study \eqref{eq:spectralgapintuition} w.r.t.~a different measure $\rho (x)\, dx = \frac{\psi}{\twp_x}(x)\, dx$, where $\psi$ is the eigenfunction of the adjoint operator $L^{\#,*}$ 
corresponding to the eigenvalue $0$. It can be shown under mild assumptions that $\psi$ exists and is strictly positive. Then 
\[\langle L^{\#}v, \twp_x \rangle_{\rho} =  \langle L^{\#}v, \psi \rangle = \langle v, L^{\#,*}\psi \rangle = 0\] 
for every $v \in H^2(\rho)$, which shows that components of the linearized frozen wave equation 
pointing towards the direction of the traveling wave solutions and its orthogonal components are infinitesimally separated in $L^2 (\rho )$. Our main result Theorem \ref{thm:spectralgap} then proves the $L^2 (\rho)$-version 
\begin{equation}
\label{eq:sgfrozenwaveintuition}
\langle L^{\#}v, v\rangle_{\rho} \leq -\kappa \|v\|_{\rho}^2 + Z \langle v, \twp_x \rangle_{\rho}^2
\end{equation}
of (\ref{eq:spectralgapintuition}) under rather general assumptions. 
(\ref{eq:sgfrozenwaveintuition}) allows to derive stability results up to a finite time horizon $T$. 
In \cite{langMSA} it was shown for the neural field example how (\ref{eq:sgfrozenwaveintuition}) can be used to describe the influence of the noise on multiple scales, and in particular to express the (stochastic) stability of the wave. 

\smallskip 
\noindent 
The analogue of $\rho$ in the local case, i.e.~the case, where $\twp$ is the traveling wave solution 
$-c\twp_x = d\twp_{xx} + f(\twp )$ to the reaction-diffusion equation 
$\partial_t v = d\partial_{xx} v + f(v)$ with bistable reaction term $f$, is given by $e^{\frac cd x}$, since in this case $\psi (x) = e^{\frac cd x}\twp_x $ is an eigenfunction of the adjoint operator $L^{\#,*} v = - c\partial_x v + d \partial_{xx} v + f^\prime (\twp ) v$. Writing  $v = e^{-\frac c{2d} x}h$ turns the frozen wave operator into a Schr\"odinger operator, since $L^\# v = e^{-\frac c{2d} x}\left( dh_{xx} + (f^\prime (\twp ) - \frac{c^2}{2d}) h\right)$, and turns condition
\eqref{eq:sgfrozenwaveintuition} in this case into 
$$ 
\int (d h_{xx} +  (f^\prime (\twp ) - \frac{c^2}{4d})) h\, h\, dx \le - \kappa \int h^2\, dx + Z\langle h, e^{\frac c{2d} x}\twp_x\rangle^2 
$$ 
which is used in \cite{fifemcleod} to obtain the stability of traveling wave solutions w.r.t.~the sup-norm. 

\smallskip 
\noindent 
Let us return to the nonlocal case. The drawback of condition \eqref{eq:sgfrozenwaveintuition} is that 
on larger time scales, we lose control in $L^2 (\rho )$ over the nonlinear part of the dynamics. 
Here it would be more suitable to work in $L^2$ (w.r.t.~the Lebesgue measure). If $c=0$, the $L^2(\rho)$-norm is equivalent to the $L^2$-norm, but this is typically not the case if $c \neq 0$. 
In the second part of this article, we therefore study in particular the long-term stability of the traveling wave. We extend the $L^2$-stability for the case $c=0$ to small wave speeds $c$ by a perturbation argument and show how it can be used to derive a long-term stochastic stability result. 

\smallskip 
\noindent 
The article is structured as follows.
In section \ref{subsection:sgsetting} we describe the mathematical setting.
Our main theorem, the spectral gap inequality in $L^2(\rho)$, is stated and proved in section \ref{subsection:sgineqrho}.
We show that the assumptions we make are satisfied in a very general setting, in particular the result applies to the examples stated above (section \ref{subsection:examples}).

\smallskip 
\noindent 
The long-term $L^2$-stability is studied in section \ref{section:L2stab}. We carry out the perturbation argument (section \ref{subsection:sgsmallc}) and obtain a second version of the spectral gap inequality for small wave speeds $c$. We show how the smallness condition on $c$ can be translated into a condition on the parameters of the system by deriving bounds on the wave speed in the example of the neural field equation when the strength of the synaptic connections is modeled by a two-sided exponential kernel (section \ref{subsection:boundsc}). 
Finally, we derive a long-term stochastic stability result for the neural field example (section \ref{subsection:longtermstab}).

\section{$L^2(\rho)$-Stability}

\subsection{The Setting}
\label{subsection:sgsetting}
We denote by $H^k = W^{k,2}$ the Sobolev space of $k$-times weakly differentiable functions equipped with the inner product
\[ \langle u, v\rangle_{H^k} = \sum_{i=0}^k \int u^{(i)}(x) v^{(i)}(x)\, dx .\]
Analogously, for a continuous density $ \mu: \R \rightarrow (0,\infty)$, we denote by $H^k(\mu)$ the weighted Sobolev space with inner product
\[ \langle u, v \rangle_{H^k(\mu)} = \sum_{i=0}^k \int u^{(i)}(x) v^{(i)}(x) \mu(x) dx\]
and set $L^2(\mu)= H^0(\mu)$.

Motivated by the traveling wave examples given above we consider the operator
\[L^{\#}v = -fv+c\partial_x v+d\partial_{xx}v + r w\ast(qv), \qquad D(L^{\#}) = H^2\]
(if $d=0$, then $D(L^{\#})=H^1$).
We make the following assumptions on the parameters. 
\begin{itemize}
	\item $c\in\R$, $d \geq 0$
	\item $f,r,q \in C(\R)$, $r>0$ and $q>0$
	\item $f, r$, and $q$ are bounded, and both, $\inf_{x\in\mathbb R} q(x)$ and 
	      $\inf_{x\in\mathbb R} r(x)$, are strictly positive 
	\item $w\geq 0$ is differentiable almost everywhere, $\int w(x) dx = 1$, and $\int |w_x(x)| dx <\infty$ 
\end{itemize}
Note that in the case of the frozen wave operator $L^{\#}$ associated to (\ref{eq:evoleq}) 
we have that 
\[
f = - \partial_1 S(\twp, w\ast g(\twp)), \quad r = \partial_2 S(\twp, w\ast g(\twp)), 
\quad q = g'(\twp).
\]

\noindent 
We decompose $L^{\#}$ into a local and a nonlocal part,
\[L^{\#}v = Av + Pv,\]
where the local part is given by
\[Av =  -f v   + c\partial_x v + d \partial_{xx}v,\]
and the nonlocal part is
\[Pv = \int p(x,y) v(y) dy\]
with
\[p(x,y) = r(x) w(x-y) q(y).\]
The adjoint of $L^{\#}$ is
\[L^{\#,*}v = A^*v + P^*v, \qquad D(L^{\#,*})=H^2,\]
where the local part is
\[A^*v =  - f v  - c\partial_x v + d \partial_{xx}v,\]
and the nonlocal part is
\[P^*v = \int p^*(x,y) v(y) dy\]
with
\[p^*(x,y) = p(y,x).\]

\begin{assumption*}
There exists a unique (up to constant multiples) $0 \not\equiv \twp_x \in H^2$ such that $L^{\#}\twp_x=0$ and a unique (up to constant multiples) $0\not\equiv \psi \in H^2$ such that $L^{\#,*}\psi = 0$, and $\twp_x>0$ and $\psi> 0$.
\end{assumption*}
Here we denote the eigenfunction of $L^{\#}$ by $\twp_x$ in reference to the traveling wave example.
Concerning existence of the adjoint eigenfunction $\psi$ we note the following.
\begin{proposition}
Assume that $f\geq 0$, that 
\begin{equation}
\label{eq:bistab}
\lim_{x\rightarrow \pm \infty} -f(x)+ r(x) q(x) < 0, 
\end{equation}
corresponding to the bistability of $S$ in (\ref{eq:evoleq}), and that there exists a unique $0 \not\equiv \twp_x \in H^2$ such that $L^{\#}\twp_x=0$.
Then there exists a unique $\psi \in H^2$ such that $L^{\#,*}\psi = 0$, and $\psi> 0$.
\end{proposition}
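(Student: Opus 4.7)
My approach is to realize $\psi$ as the principal eigenfunction of the positive $C_{0}$-semigroup generated by $L^{\#,*}$ on $L^{2}(\mathbb{R})$, via a Perron--Frobenius / Krein--Rutman argument for quasi-compact positive semigroups. The starting observation is that $L^{\#,*}$ has exactly the structural form of $L^{\#}$: writing
\[
P^{*}v(x) = q(x)\,(\tilde w * (rv))(x),\qquad \tilde w(x) := w(-x),
\]
shows that $L^{\#,*}$ arises from $L^{\#}$ under the substitutions $c\mapsto -c$, $r\leftrightarrow q$, $w\mapsto\tilde w$, all of which preserve the standing hypotheses on the coefficients; this lets me run the analysis in parallel for $L^{\#}$ and $L^{\#,*}$.

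First I would verify that $L^{\#,*}$ generates a positive, irreducible $C_{0}$-semigroup on $L^{2}$. The local part $A^{*}$ generates a positive semigroup (parabolic maximum principle for $d>0$, explicit transport--killing representation for $d=0$), and $P^{*}$ is a bounded positive operator since $r,q,w\ge 0$. Positivity of $e^{tL^{\#,*}}$ then follows from the Dyson / variation-of-constants expansion, and irreducibility from $\inf r,\inf q > 0$ together with $\int w = 1$ and the non-trivial support of $w$ (iterated convolutions spread mass across $\mathbb{R}$).

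Next I would bound the essential spectrum. Assuming $f, r, q$ admit limits at $\pm\infty$ (as is automatic in the traveling-wave setting), the operators obtained by freezing coefficients at $\pm\infty$ are constant-coefficient and diagonalised by the Fourier transform; using $|\hat w(k)|\le \|w\|_{1}=1$ together with the bistability condition (\ref{eq:bistab}), the real parts of their symbols are uniformly bounded above by some $-\delta<0$. A Weyl-sequence argument then yields $\sup\{\mathrm{Re}\,\lambda:\lambda\in\sigma_{\mathrm{ess}}(L^{\#})\}\le -\delta$, and the same bound transfers to $L^{\#,*}=(L^{\#})^{*}$ by passing to adjoints on the reflexive space $L^{2}$.

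With these pieces in place I would apply the Perron--Frobenius theorem for positive irreducible quasi-compact $C_{0}$-semigroups (Arendt--Grabosch--Greiner et al.): the spectral bound $s(L^{\#})$ is a simple eigenvalue with strictly positive eigenfunction, and positive eigenvectors occur only at the spectral bound. Combined with $L^{\#}\twp_{x}=0$ and $\twp_{x}>0$, this forces $s(L^{\#})=0$, hence $s(L^{\#,*})=0$; applying the same theorem to $L^{\#,*}$ produces a strictly positive $\psi\in\ker L^{\#,*}$, unique up to scalar. Regularity $\psi\in H^{2}$ then follows from rereading the equation as $d\psi_{xx}+c\psi_{x}=f\psi-P^{*}\psi\in L^{2}$ (using $w_{x}\in L^{1}$ to keep $P^{*}\psi$ in $H^{1}$). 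The main technical obstacle is pinning down $0$ as the spectral bound, rather than merely as one eigenvalue of $L^{\#,*}$: this forces one to combine the asymptotic Fourier estimate that isolates $0$ from $\sigma_{\mathrm{ess}}$ with the Perron--Frobenius uniqueness statement for positive eigenvectors, and to invoke the semigroup (quasi-compact) form of Krein--Rutman rather than its classical statement, since the resolvent of $L^{\#,*}$ on $L^{2}(\mathbb{R})$ is not compact.
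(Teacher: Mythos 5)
Your route is genuinely different from the paper's. The paper does not go through the spectrum of the generator or the semigroup at all: it writes $L^{\#,*}=K+B$, where $K$ is the nonlocal part localized to a box $[-M,M]$ (Hilbert--Schmidt, hence compact) and $B$ collects the local part plus the tail of the nonlocal part; the hypotheses $f\ge 0$ and \eqref{eq:bistab} give $\langle Bv,v\rangle\le-\delta\|v\|^2$ for $M$ large, so $B$ is invertible and $B^{-1}L^{\#,*}=I+B^{-1}K$ is Fredholm of index $0$. Existence and uniqueness of $\psi$ then come from the Fredholm alternative and $\dim\ker L^{\#}=1$. The sign is settled without any irreducibility or Krein--Rutman input: resolvent positivity gives $\alpha R_\alpha(\psi^+)\ge\psi^+$, and pairing the nonnegative function $\alpha R_\alpha(\psi^+)-\psi^+$ against the strictly positive $\twp_x$ (using $\langle\twp_x,L^{\#,*}u\rangle=\langle L^{\#}\twp_x,u\rangle=0$) forces equality, so $\psi=\psi^+$; strict positivity follows from a Feynman--Kac bound $\psi(x)\ge E_x(e^{-mt}\psi(X_t))$. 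This is more elementary and uses the given eigenfunction $\twp_x$ in an essential way, whereas your argument would, if completed, yield more (simplicity of $0$ as the spectral bound, a spectral gap for the essential spectrum).

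There are, however, concrete gaps in your sketch. First, the Weyl-sequence argument runs in the wrong direction: singular sequences certify that a point \emph{belongs} to the essential spectrum, but to conclude $\sup\{\mathrm{Re}\,\lambda:\lambda\in\sigma_{\mathrm{ess}}(L^{\#,*})\}\le-\delta$ you must show $\lambda-L^{\#,*}$ is Fredholm for $\mathrm{Re}\,\lambda>-\delta$, which requires exhibiting the operator as an invertible part plus a compact perturbation --- i.e.\ precisely the $K+B$ decomposition you are trying to avoid. Second, quasi-compactness of the semigroup does not follow from a bound on $\sigma_{\mathrm{ess}}$ of the generator alone, since the spectral mapping theorem can fail for essential spectra of $C_0$-semigroups; you need the essential growth bound, which again is most easily controlled by a ``decaying part plus compact perturbation'' splitting. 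Third, irreducibility of the semigroup is not guaranteed by the standing assumptions: for $d=0$, $c=0$ and $w$ supported in a half-line, iterated convolutions spread mass in one direction only, so positivity improvement can fail; the proposition does not assume $w>0$ near $0$ (that hypothesis only appears later, in Theorem \ref{thm:application}). Finally, \eqref{eq:bistab} only constrains the limit of the combination $-f+rq$, not of $f$, $r$, $q$ separately, so the frozen-coefficient operators at $\pm\infty$ are not available without an extra assumption, which you acknowledge but which the paper does not need. Each gap is repairable, but repairing the first two essentially reproduces the paper's decomposition, at which point the Fredholm alternative already finishes the existence and uniqueness part directly.
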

\begin{proof}
Similar to the proof in the neural field setting, cf.~Prop.~2.2 in \cite{langMSA}, we can decompose 
\[L^{\#, *} = K + B,\]
where for some $M>0$,
\[Kv(x) = \mathds{1}_{[-M, M]}(x) P^*v (x) + \mathds{1}_{[-M, M]^c}(x)  \int_{-M}^M p^*(x,y) v(y)\, dy\]
and 
\[Bv = -fv - c\partial_x v + d \partial_{xx} v + \mathds{1}_{[-M, M]^c}(x) \int_{[-M, M]^c} p^*(x,y) v(y)\, dy.\]
The operator $K$ is Hilbert-Schmidt and hence compact.
Using (\ref{eq:bistab}) and the fact that $f\geq 0$, it can be shown that if we choose $M$ large 
enough, there exists $\delta>0$ such that $\langle Bv, v\rangle \leq -\delta \|v\|^2$, so that 
$B$ has a bounded inverse. It follows that $B^{-1}L^{\#,*} = I + B^{-1}K$, and $B^{-1}K$ is 
compact. Therefore there exists a unique $\psi\in H^2$ such that $L^{\#,*}\psi = 0$. 

We show that $\psi$ is of one sign. Assume without loss of generality that there exists $x$ s.t.~$\psi(x)>0$ and set $\psi^+ = \psi \vee 0$. Note that $L^{\#,*}$ generates a $C_0$-semigroup in $L^2$. Define $R_{\alpha} = (\alpha-L^{\#,*})^{-1}$
to be the associated resolvent. Note that for $\alpha$ large enough, $R_{\alpha}$ is positivity preserving, that is, $u\geq 0$ implies that $R_{\alpha}u\geq 0$.
It follows that 
\[\alpha R_{\alpha} (\psi^+) = \alpha ( R_{\alpha} \psi +  R_{\alpha} \psi^-) \geq \alpha R_{\alpha} \psi \vee 0 = \psi^+ \, , 
\]
hence $\alpha R_\alpha (\psi^+ ) - \psi^+ \ge 0$. Then, since $\twp_x>0$, 
\[
\langle \twp_x , \alpha R_\alpha (\psi^+)-\psi^+\rangle = 
\langle \twp_x , L^{\#,*} R_\alpha (\psi^+) \rangle = \langle L^{\#}\twp_x, R_\alpha (\psi^+ ) \rangle = 0 
\]
and thus $\alpha R_\alpha (\psi^+ ) = \psi^+$, hence $\psi^+\in D(L^{\#,*})$ and 
$L^{\#, *} \psi^+ = 0$, hence $\psi = \psi^+$. To see that $\psi$ is strictly positive, set
\[dX_t = \sqrt{2d}\, dW_t - c\, dt\]
and note that for $m := \|f\|_{\infty}$,
\begin{align*}
d(e^{-mt}\psi(X_t)) 
& = e^{-mt}\psi_x(X_t)\, dW_t + e^{-mt} (d\partial_{xx}-c\partial_x - m) \psi(X_t)\, dt\\
& \leq e^{-mt}\psi_x(X_t)\, dW_t + e^{-mt} L^{\#,*}\psi(X_t)\, dt = e^{-mt}\psi_x(X_t)\, dW_t,
\end{align*}
and thus for all $x\in\R$,
\[\psi(x) \geq E_x(e^{-mt}\psi(X_t)) > 0.\]
\end{proof}

We normalize $\psi$ such that $\langle \twp_x, \psi \rangle = 1$, so that 
\begin{equation}
\label{eq:nu}
\nu(dx) = \twp_x \psi\, dx
\end{equation}
is a probability measure, and introduce the density 
\[\rho(x) = \frac{\psi(x)}{\twp_x(x)}.\]
We assume that there exists $K_{\rho}$ such that
\begin{equation}
\label{eq:rho2}
 w\ast \rho(x) \leq K_{\rho} \rho(x).
\end{equation}
This implies that the frozen wave operator $L^{\#}:H^2(\rho)\rightarrow L^2(\rho)$ is well-defined.

\subsubsection{Reformulation}
\label{subsubsection:reformulation}

We want to prove that there exists $\kappa>0$ such that for all $v\in H^2(\rho)$
\begin{equation}
\label{eq:spectralgapintro}
\langle L^{\#} v, v\rangle_{\rho} \leq -\kappa \Big( \|v\|_{\rho}^2 - \langle v, \twp_x\rangle_{\rho}^2\Big).
\end{equation}
The following representation of the energy $\langle A u,u\rangle_\rho$ related to the local part 
as a sum of squares will be useful: suppose that $v = h\twp_x$ for some function $h\in C^2_c (\mathbb R)$. 
Then 
$$ 
\begin{aligned} 
A\left( h\twp_x\right) 
& = hA\twp_x +  \twp_x \left( d\partial_{xx}h + \left( c + 2d\frac{\partial_x \twp_x}{\twp_x}\right) \partial_x h \right) \\ 
& = - hP\twp_x + \twp_x\left( d\partial_{xx}h + \left( c + 2d\frac{\partial_x \twp_x}{\twp_x}\right) \partial_x h \right) \, . 
\end{aligned}  
$$ 
Hence, integration against $h\twp_x\, \rho \, dx$ and integration by parts yield 
\begin{equation}
\label{Stannat1} 
\begin{aligned} 
\int A& \left( h\twp_x\right) h\twp_x\, \rho  \, dx 
= - \int h^2 P\twp_x \psi\, dx - d\int \left( \partial_x h\right)^2 \twp_x \psi\, dx \\ 
& \hspace{3.5cm} + \int \left( c + d\frac{\partial_x \twp_x}{\twp_x} - d\frac{\partial_x \psi}{\psi} \right) 
\partial_x h h\twp_x\psi\, dx \\ 
& = - \int h^2 P\twp_x \psi\, dx - d\int \left( \partial_x h\right)^2 \twp_x \psi\, dx \\ 
& \hspace{3.5cm}  - \frac 12 \int \partial_x \left(\twp_x \psi \left( c + d\frac{\partial_x \twp_x}{\twp_x}  
  - d\frac{\partial_x \psi}{\psi} \right)\right) h^2 \, dx \\ 
& = - \frac 12 \int h^2 P\twp_x \psi\, dx - d\int \left( \partial_x h\right)^2 \twp_x \psi\, dx   - \frac 12 \int h^2\twp_x P^\ast\psi\, dx 
\end{aligned} 
\end{equation} 
thereby using the identity 
$$ 
\begin{aligned} 
\partial_x  \left(\twp_x \psi \left( c + d\frac{\partial_x \twp_x}{\twp_x} - d\frac{\partial_x \psi}{\psi} \right)\right) 
& = c\partial_{x}\left( \twp_x\psi\right) + d\partial_x \left( \partial_x \twp_x \psi - \twp_x \partial_x\psi\right) \\ 
& = \left( c\partial_{x}\twp_x + d\partial_{xx}\twp_x \right) \psi + \left( c\partial_x\psi - d\partial_{xx}\psi\right)\twp_x \\ 
& = A\twp_x \psi - \twp_x A^* \psi = \twp_x P^* \psi - P\twp_x\, \psi\, . 
\end{aligned} 
$$
To reformulate the nonlocal part of the energy in a similar manner, it is convenient to introduce the integral operator 
$$
P_0 h := \frac{P(h\twp_x)}{P\twp_x}  \qquad\mbox{ for } v = h\twp_x \, , h\in L^2 (\nu )\, . 
$$ 
Note that $P_0\mathds{1} \equiv \mathds{1}$, so that 
\[p_0(x,y) = \frac{p(x,y)\twp_x(y)}{P\twp_x(x)} = \frac{w(x-y) q(y) \twp_x(y)}{w\ast(q\twp_x)(x)}\]
is a Markov kernel. Moreover, let us define the probability measures 
\begin{equation}
\label{eq:mu}
\mu(x) = \frac{1}{Z_{\mu}} P\twp_x(x)\psi(x), \hspace{.5cm} \mu^*(x) = \frac{1}{Z_{\mu}} \twp_x(x) P^*\psi(x),
\end{equation}
where $Z_{\mu}=\int P\twp_x(x) \psi(x) dx = \int \twp_x(x) P^*\psi(x) dx$ is a normalizing constant. 
With these notations we can reformulate the nonlocal part as 
\begin{equation}
\label{Stannat2} 
\begin{aligned} 
\int P(h\twp_x) h\twp_x\, \rho\, dx 
& = Z_\mu \int P_0 h \, h\, d\mu = Z_\mu\mbox{Cov}_\mu (P_0 h,h) + Z_\mu \int P_0h \, d\mu \int h\, d\mu 
\end{aligned} 
\end{equation} 
so that we can combine \eqref{Stannat1} and \eqref{Stannat2} to obtain 
\begin{equation}
\label{Stannat3} 
\begin{aligned} 
\int L^\#\left( h\twp_x\right) h\twp_x \rho\, dx 
& =  - \frac{1}{2} \int h^2 P\twp_x \psi\, dx - d\int \left( \partial_x h\right)^2 \twp_x \psi\, dx 
 - \frac 12 \int h^2\twp_x P^\ast\psi\, dx \\ 
& \qquad + Z_\mu\mbox{Cov}_\mu (P_0 h,h) + Z_\mu \int P_0h \, d\mu \int h\, d\mu \\ 
& =  - \frac{Z_\mu}2 \mbox{Var}_\mu (h) - \frac{Z_\mu}2 \mbox{Var}_{\mu^\ast} (h) 
     - \frac{Z_\mu}2 \left( \int h\, d\mu - \int h\, d\mu^\ast\right)^2 \\
& \qquad  + Z_\mu\mbox{Cov}_\mu (P_0 h,h)  - d\int \left( \partial_x h\right)^2 d\nu  \\ 
& \le\frac{Z_\mu}2 \mbox{Var}_\mu (P_0 h) - \frac{Z_\mu}2 \mbox{Var}_{\mu^\ast} (h) 
  - \frac{Z_\mu}2 \left( \int h\, d\mu - \int h\, d\mu^\ast\right)^2 \\ 
& \qquad  - d\int \left( \partial_x h\right)^2 d\nu \, . 
\end{aligned} 
\end{equation} 
The last inequality then extends to all $v = h\twp_x\in H^2 (\rho )$ using a simple approximation.

\subsection{Spectral Gap Inequality in $L^2(\rho)$}
\label{subsection:sgineqrho}

In order to prove (\ref{eq:spectralgapintro}), by (\ref{Stannat3}), we need to estimate $\mbox{Var}_\mu (P_0 h)$ against $\mbox{Var}_{\mu^\ast} (h)$. We will use the following result on the $L^2$-contractivity of Markovian integral operators, which is of independent interest. 

\begin{lemma} 
\label{lem:L2contractivity} 
Let $\nu$ be a probability measure on $\mathbb R$ with strictly positive continuous density, 
$k : \mathbb R^2 \to [0, \infty )$ be measurable such that $\int k(x,y)\, dy = 1$ for all $x$. Assume that 
\begin{itemize} 
\item[(i)] $k$ is differentiable almost everywhere w.r.t.~$x$ and  
$$ 
M := \text{ess-sup}_{x\in\mathbb R} \int \frac{k_x (x,y)^2}{k(x,y)}\, dy < \infty
$$
\item[(ii)] $\exists$ $\kappa_0 < \infty$ such that 
$$ 
\mbox{Var}_\nu (h) \le \kappa_0 \int h_x^2 \, d\nu \qquad\forall h\in H^1 (\nu )\,  . 
$$
\end{itemize}   
Denote by $Kh(x) = \int k(x,y)h(y)\, dy$ the Markovian integral operator associated with $k$. 
Then 
$$ 
\mbox{Var}_\nu (Kh) \le \frac{\kappa_0 M}{1 + \kappa_0 M} \mbox{Var}_{\nu K} (h) 
\qquad\forall h\in L^2 (\nu )\,  . 
$$
Here $\nu K$ is the probability measure on $\mathbb R$ defined by 
$\int h\, d\nu K = \int Kh\, d\nu$. 
\end{lemma}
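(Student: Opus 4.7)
The plan is to apply the Poincaré inequality (ii) directly to the function $Kh$ and then exploit the identity $\int k_x(x,y)\, dy = 0$, which follows by differentiating $\int k(x,y)\, dy \equiv 1$ (this is justified by (i), since Cauchy-Schwarz gives $\int |k_x(x,y)|\, dy \le M^{1/2}$ uniformly in $x$). The consequence is that for any function $c(x)$ depending only on $x$,
$$(Kh)_x(x) = \int k_x(x,y)\, h(y)\, dy = \int k_x(x,y)\bigl(h(y)-c(x)\bigr)\, dy,$$
so one is free to centre the integrand by $c(x)$. The crucial choice will be $c(x)=Kh(x)$.

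With this centring in place, I would apply Cauchy-Schwarz in the form $k_x = (k_x/\sqrt{k})\sqrt{k}$ together with assumption (i) to obtain
$$\bigl((Kh)_x(x)\bigr)^2 \le \left(\int \frac{k_x(x,y)^2}{k(x,y)}\, dy\right)\int k(x,y)\bigl(h(y)-Kh(x)\bigr)^2 dy \le M\bigl(K(h^2)(x)-(Kh)^2(x)\bigr).$$
Integrating against $\nu$ and applying (ii) to $Kh$ (which lies in $H^1(\nu)$ thanks to (i)) yields
$$\mbox{Var}_\nu(Kh) \le \kappa_0 \int \bigl((Kh)_x\bigr)^2 d\nu \le \kappa_0 M \int\bigl[K(h^2)-(Kh)^2\bigr] d\nu.$$

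The final step is the key observation that the right-hand integral is precisely $\mbox{Var}_{\nu K}(h)-\mbox{Var}_\nu(Kh)$. Indeed, by the defining property of $\nu K$ one has $\int K(h^2)\, d\nu = \int h^2\, d\nu K$ and $\int Kh\, d\nu = \int h\, d\nu K$, so
$$\int\bigl[K(h^2)-(Kh)^2\bigr] d\nu = \bigl(\mbox{Var}_{\nu K}(h) + m^2\bigr) - \bigl(\mbox{Var}_\nu(Kh)+m^2\bigr),$$
where $m=\int h\, d\nu K$. Writing $V:=\mbox{Var}_\nu(Kh)$ and $W:=\mbox{Var}_{\nu K}(h)$, the estimate becomes $V \le \kappa_0 M(W-V)$, which rearranges to the claimed inequality $V \le \frac{\kappa_0 M}{1+\kappa_0 M}W$. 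A brief density argument extends the bound from $h \in H^1(\nu K)$ to all $h\in L^2(\nu)$ (for which both sides are finite by the trivial Jensen bound $V\le W$).

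The main obstacle, and the whole content of the lemma beyond the naive constant $\kappa_0 M$, is the centring trick: subtracting the $x$-dependent quantity $Kh(x)$ (rather than a constant such as $\int h\, d\nu K$) turns the Poincaré energy into the ``conditional variance'' $K(h^2)-(Kh)^2$, which is exactly the slack $W-V$. This is what allows the self-improving absorption argument that sharpens $\kappa_0 M$ to $\kappa_0 M/(1+\kappa_0 M) < 1$, a contraction factor that stays bounded away from $1$ only because of this matching structure.
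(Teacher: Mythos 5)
Your proof is correct and follows essentially the same route as the paper's: the centring of $(Kh)_x$ by $Kh(x)$ using $\int k_x(x,y)\,dy=0$, the Cauchy--Schwarz bound by $M\bigl(K(h^2)-(Kh)^2\bigr)$, the Poincar\'e inequality applied to $Kh$, and the absorption step identifying $\int\bigl[K(h^2)-(Kh)^2\bigr]d\nu$ with $\mbox{Var}_{\nu K}(h)-\mbox{Var}_\nu(Kh)$ are all exactly the paper's argument, merely phrased as a rearrangement rather than as adding $\kappa_0 M\,\mbox{Var}_\nu(Kh)$ to both sides. The only cosmetic difference is that you justify differentiation under the integral explicitly and work with a density argument stated slightly loosely at the end (the paper approximates from bounded $h$), neither of which changes the substance.
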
 

\begin{proof} 
First assume that $h\in{\mathcal B}_b ( \mathbb R)$. Then $Kh (x) = \int k(x,y)h(y)\, dy$ 
is differentiable almost everywhere with 
\begin{align*}
\left( \partial_x Kh(x)\right)^2 
& = \left( \int \partial_x k(x,y) h(y) \, dy\right)^2 
 \le \int \frac{\partial_x k(x,y)^2}{k(x,y)} \, dy \int k(x,y) h^2 (y) \, dy 
\le M K h^2 (x) \, . 
\end{align*}
In particular, $Kh\in H^1 (\nu )$. Moreover, $K\mathds{1} = \mathds{1}$ implies that 
$$ 
0 = \partial_x K\mathds{1}(x) = \int \partial_x k(x,y)\, dy \, , 
$$ 
hence 
\begin{align*}
\left( \partial_x Kh(x)\right)^2 
& = \left( \int \partial_x k(x,y) \left( h(y) - Kh (x)\right) \, dy\right)^2 \\ 
& \le \int \frac{\partial_x k(x,y)^2}{k(x,y)} \, dy  
\int k(x,y) \left( h - Kh (x)\right)^2(y) \, dy \\ 
& \le M \left( K\left( h^2\right) - \left( Kh (x) \right)^2\right)\, . 
\end{align*}
It follows that 
\begin{align*}
\mbox{Var}_\nu (Kh) 
& \le \kappa_0 \int \left( \partial_x Kh\right)^2 \, d\nu 
\le \kappa_0 M \left( \int K\left( h^2\right)\, d\nu - \int \left( Kh\right)^2\, d\nu\right)
 \,  , 
\end{align*}
hence 
\begin{align*}
(1 + \kappa_0 M)\mbox{Var}_\nu (Kh) 
& \le \kappa_0 M \left( \int K\left( h^2\right)\, d\nu  - \left( \int Kh\, d\nu\right)^2 \right)\\ 
& =  \kappa_0 M \left( \int  h^2\, d\nu K - \left( \int h\, d\nu K \right)^2\right)  
\end{align*}
which implies the assertion for bounded $h$. The general case then follows by approximation. 
\end{proof}

Denote by $\mathcal{S}$ the support of $w$, $\mathcal{S} = \{ x\in \R: w(x) > 0 \}$.
We make the following additional assumption on $w$.
\begin{assumption}
\label{ass:w}
\leavevmode
\begin{enumerate}[(i)]
	\item for all $v\in L^2(\rho)$, $\partial_x w\ast v = w_x \ast v$
	\item $M := \sup_{x\in\R} \int_{x-\mathcal{S}} \big( \frac{w_x(x-y)}{w(x-y)}\big)^2 p_0(x,y) dy <\infty$
\end{enumerate}
\end{assumption}
Let $\nu, \mu, $ and $\mu^*$ be as in (\ref{eq:nu}) and (\ref{eq:mu}).
\begin{theorem}
\label{thm:spectralgap}
Assume that (\ref{eq:rho2}) and Assumption \ref{ass:w} are satisfied and that furthermore
\begin{enumerate}[(i)]
\item there exist $\delta_i, \delta^\ast_i > 0, i=1,2,$ such that
\[
\delta_1 \twp_x  \leq P\twp_x \leq \delta_2 \twp_x, \qquad
\delta^\ast_1 \psi \leq P^*\psi \leq \delta^\ast_2 \psi.\]
In particular, the $\nu$-, $\mu$-, and $\mu^*$-norms are equivalent.
\item there exists $\kappa_0 > 0$ such that for all $h\in H^1(\mu)$,
\begin{equation}
\label{eq:poincare}
Var_{\mu}(h) \leq \kappa_0 \int h_x^2(x) \mu(dx)
\end{equation}
\end{enumerate}
Then for all $v\in H^2(\rho)$,
\[\langle L^{\#}v ,v \rangle_{\rho} \leq -\kappa \Big( \|v\|_{\rho}^2 - \langle v, \twp_x\rangle_{\rho}^2\Big),\]
where 
\[\kappa = \frac{\delta^\ast_1}{2} \Big(1-\frac{\kappa_0M}{1+\kappa_0M}\Big).\]
\end{theorem}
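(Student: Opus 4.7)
The strategy is to write $v = h\twp_x$ (permissible since $\twp_x>0$) and exploit the reformulation \eqref{Stannat3} derived in Section \ref{subsubsection:reformulation}, combined with the $L^2$-contractivity estimate of Lemma \ref{lem:L2contractivity}. First I would observe that
\[\|v\|_\rho^2 - \langle v,\twp_x\rangle_\rho^2 = \int h^2 \twp_x\psi\, dx - \Bigl(\int h\twp_x\psi\, dx\Bigr)^2 = \mathrm{Var}_\nu(h),\]
so the target inequality reads $\langle L^\# v,v\rangle_\rho \le -\kappa\mathrm{Var}_\nu(h)$. From \eqref{Stannat3}, after discarding the two manifestly nonpositive terms involving $(\int h\,d\mu-\int h\,d\mu^\ast)^2$ and $d\int(\partial_x h)^2 d\nu$, it suffices to show
\[\tfrac{Z_\mu}{2}\mathrm{Var}_\mu(P_0 h) - \tfrac{Z_\mu}{2}\mathrm{Var}_{\mu^\ast}(h) \le -\kappa\,\mathrm{Var}_\nu(h).\]
So the task reduces to (a) dominating $\mathrm{Var}_\mu(P_0 h)$ strictly by $\mathrm{Var}_{\mu^\ast}(h)$ and (b) comparing $\mathrm{Var}_{\mu^\ast}$ with $\mathrm{Var}_\nu$.

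For (a) I would apply Lemma \ref{lem:L2contractivity} with kernel $k(x,y) = p_0(x,y)$ and underlying measure $\mu$. Condition (ii) of the lemma is exactly hypothesis (ii) of the theorem. For condition (i), I would compute the logarithmic derivative
\[\partial_x \log p_0(x,y) = \frac{w_x(x-y)}{w(x-y)} - \frac{\partial_x\bigl(w\ast(q\twp_x)\bigr)(x)}{w\ast(q\twp_x)(x)} =: B(x,y) - A(x),\]
using Assumption \ref{ass:w}(i) to justify differentiating under the integral. Differentiating $\int p_0(x,y)\,dy = 1$ gives $\int p_0 B\,dy = A(x)$, and then expanding the square yields
\[\int \frac{(\partial_x p_0(x,y))^2}{p_0(x,y)}\,dy = \int p_0(x,y)B(x,y)^2\,dy - A(x)^2 \le M,\]
where the bound $M$ is exactly the quantity of Assumption \ref{ass:w}(ii). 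A direct calculation then identifies the output measure $\mu P_0$ of the lemma with $\mu^\ast$: by the definitions of $P_0$, $\mu$, and $\mu^\ast$,
\[\int P_0 h\, d\mu = \tfrac{1}{Z_\mu}\int P(h\twp_x)\,\psi\,dx = \tfrac{1}{Z_\mu}\int h\twp_x\, P^\ast\psi\,dx = \int h\, d\mu^\ast.\]
Hence Lemma \ref{lem:L2contractivity} delivers $\mathrm{Var}_\mu(P_0 h)\le \tfrac{\kappa_0 M}{1+\kappa_0 M}\mathrm{Var}_{\mu^\ast}(h)$, and thus
\[\langle L^\# v, v\rangle_\rho \le -\tfrac{Z_\mu}{2}\Bigl(1 - \tfrac{\kappa_0 M}{1+\kappa_0 M}\Bigr)\mathrm{Var}_{\mu^\ast}(h).\]

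For step (b), I would exploit hypothesis (i): since $\mu^\ast(dx) = Z_\mu^{-1}\twp_x P^\ast\psi\, dx \ge (\delta_1^\ast/Z_\mu)\,\twp_x\psi\,dx = (\delta_1^\ast/Z_\mu)\,\nu(dx)$, the variational representation $\mathrm{Var}_{\mu^\ast}(h) = \inf_{a\in\R}\int(h-a)^2\,d\mu^\ast$ immediately gives $\mathrm{Var}_{\mu^\ast}(h)\ge (\delta_1^\ast/Z_\mu)\,\mathrm{Var}_\nu(h)$. The $Z_\mu$ factors cancel, producing exactly the stated constant $\kappa = \tfrac{\delta_1^\ast}{2}(1 - \tfrac{\kappa_0 M}{1+\kappa_0 M})$. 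The main obstacle is the verification of condition (i) of Lemma \ref{lem:L2contractivity}: translating the geometric assumption on $w$ into a bound on the weighted Fisher information of $p_0$ requires the cancellation identity $\int p_0 B\,dy = A$ from $\int p_0\,dy = 1$, without which only the cruder bound $\int p_0 B^2\,dy$ would be available. Everything else reduces to bookkeeping on top of \eqref{Stannat3}, a density comparison, and the contractivity lemma.
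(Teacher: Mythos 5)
Your proposal is correct and follows essentially the same route as the paper: reduce to $v=h\twp_x$, invoke the sum-of-squares reformulation \eqref{Stannat3}, apply Lemma \ref{lem:L2contractivity} to the kernel $p_0$ with $\mu P_0=\mu^\ast$, and finish with the density comparison $\mu^\ast\ge(\delta_1^\ast/Z_\mu)\nu$ from hypothesis (i). The only difference is that you spell out the verification of condition (i) of the lemma (the Fisher-information bound $\int(\partial_x p_0)^2/p_0\,dy\le M$ via the cancellation $\int p_0 B\,dy=A$), a detail the paper leaves implicit.
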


\begin{proof} 
\eqref{Stannat3} implies that 
\begin{equation*}
\begin{aligned} 
\int L^\#\left( h\twp_x\right) h\twp_x \rho\, d\mu 
& \le\frac{Z_\mu}2 \mbox{Var}_\mu (P_0 h) - \frac{Z_\mu}2 \mbox{Var}_{\mu^\ast} (h) \, . 
\end{aligned}
\end{equation*} 
Applying Lemma \ref{lem:L2contractivity} to the measure $\mu$ and the kernel $p_0$, using that $\mu^* = \mu P_0$, we obtain that for $\gamma = \frac{\kappa_0 M }{1+\kappa_0 M} < 1$,  
$$ 
\mbox{Var}_\mu (P_0 h) \le \gamma  \mbox{Var}_{\mu^\ast} (h) \quad\forall h\in L^2 (\mu^\ast )\, .  
$$
\smallskip 
\noindent 
Combining both estimates we arrive at 
\begin{align*}
	\langle L^{\#}v,v\rangle_{\rho} \leq - \frac{(1-\gamma)Z_{\mu}}{2} Var_{\mu^*}(h),
\end{align*}
and since (i) implies 
\[Z_{\mu} Var_{\mu^*}(h) = Z_{\mu} \int \big(h(x)-E_{\mu^*}(h)\big)^2 \mu^*(dx) \geq \delta^\ast_1 \int \big(h(x)-E_{\mu^*}(h)\big)^2 \nu(dx) \geq\delta^\ast_1 Var_{\nu}(h),\]
we conclude that 
\[\langle L^{\#}v,v\rangle_{\rho} \leq -\kappa \big(\|v\|_{\rho}^2 - \langle v, \twp_x\rangle^2_{\rho}\big)\]
with $\kappa = \frac{\delta^\ast_1(1-\gamma)}{2}$.
\end{proof}

\subsection{Application to the Examples}
\label{subsection:examples}

We show that the assumptions in Theorem \ref{thm:spectralgap} are satisfied under rather general conditions. 

\begin{remark}
\label{rem:muckenhoupt}
Using a result by Muckenhoupt on Hardy's inequalities with weights (originally obtained by Tomaselli, Talenti, Artola, cf. \cite{muckenhoupt}, Thm. 1),
assumption (ii) in Theorem \ref{thm:spectralgap} is satisfied if and only if
\[B_1:=\sup_{r>0} \int_r^{\infty} \mu(x) dx \int_0^r \frac{1}{\mu(x)}dx < \infty\]
and
\[B_2 := \sup_{r>0} \int_{-\infty}^{-r} \mu(x) dx \int_{-r}^0 \frac{1}{\mu(x)} dx < \infty.\]
In this case we can bound $\kappa_0$ in (\ref{eq:poincare}) by
\[ B_1 \wedge B_2 \leq \kappa_0 \leq 4(B_1\vee B_2). \]
\end{remark}

\begin{theorem}
\label{thm:application}
Assume that $w>0$ in a neighborhood of $0$ and that (\ref{eq:rho2}) and Assumption \ref{ass:w} are satisfied. Assume further that
there exist $\alpha,\beta, k, l>0$, such that for all $x\geq 0$, $y\geq 0$
\begin{equation}
\label{eq:expdecay}
\mu(x+y)   \leq k e^{-\alpha y} \mu(x),	\qquad \mu(-x-y)   \leq l e^{-\beta y} \mu(-x),
\end{equation}
and that
\begin{equation}
\label{eq:boundderivative}
\Big\|\frac{\twp_{xxx}}{\twp_x}\Big\|_{\infty} + \Big\|\frac{\twp_{xx}}{\twp_x}\Big\|_{\infty}+ \Big\|\frac{\psi_{xx}}{\psi}\Big\|_{\infty} + \Big\|\frac{\psi_{x}}{\psi}\Big\|_{\infty} < \infty.
\end{equation}
Then the assumptions of Theorem \ref{thm:spectralgap} are satisfied.
\end{theorem}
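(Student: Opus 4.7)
The approach is to verify the two numbered assumptions (i) and (ii) of Theorem \ref{thm:spectralgap} separately; the remaining hypotheses there, namely (\ref{eq:rho2}) and Assumption \ref{ass:w}, are already part of the standing assumptions here.

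For assumption (i), I would use the eigenfunction equations $L^\#\twp_x = 0$ and $L^{\#,*}\psi = 0$ to rewrite
\[ P\twp_x = f\twp_x - c\twp_{xx} - d\twp_{xxx}, \qquad P^*\psi = f\psi + c\psi_x - d\psi_{xx}, \]
divide by $\twp_x$ resp.\ $\psi$, and apply (\ref{eq:boundderivative}) together with the boundedness of $f$ to get the upper bounds $P\twp_x \leq \delta_2\twp_x$ and $P^*\psi \leq \delta_2^*\psi$. The lower bounds require more, since the right-hand sides above carry no obvious sign. Here I would return to the integral representation $P\twp_x(x) = r(x)\int w(x-y)q(y)\twp_x(y)\,dy$, use the hypothesis that $w > 0$ in a neighborhood of $0$ (with $w$ continuous because $\int|w_x|<\infty$) to pick a closed sub-interval $[-a,a]$ on which $w \geq w_{\min} > 0$, and use $\|\twp_{xx}/\twp_x\|_\infty \leq K < \infty$, which says $\log\twp_x$ is globally $K$-Lipschitz, to obtain $\twp_x(y) \geq e^{-Ka}\twp_x(x)$ for $|y-x|\leq a$. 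Combined with $\inf r, \inf q > 0$, this yields
\[ P\twp_x(x) \geq 2a\, w_{\min} \inf r \, \inf q\, e^{-Ka}\, \twp_x(x) =: \delta_1\twp_x(x), \]
and the analogous argument using $\|\psi_x/\psi\|_\infty < \infty$ in place of $\|\twp_{xx}/\twp_x\|_\infty$ gives $P^*\psi \geq \delta_1^*\psi$.

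For assumption (ii), I would apply the Muckenhoupt criterion from Remark \ref{rem:muckenhoupt} and check that $B_1, B_2 < \infty$. The first part of (\ref{eq:expdecay}) gives, by the substitution $x = r + y$, $\int_r^\infty \mu(x)\,dx \leq (k/\alpha)\mu(r)$; rearranging the same inequality to $\mu(x)^{-1} \leq k e^{-\alpha(r-x)}/\mu(r)$ for $0 \leq x \leq r$ yields $\int_0^r \mu(x)^{-1}\,dx \leq k/(\alpha\mu(r))$. Multiplying the two estimates shows $B_1 \leq k^2/\alpha^2$ uniformly in $r$. The symmetric use of the second part of (\ref{eq:expdecay}) gives $B_2 \leq l^2/\beta^2$, and Muckenhoupt's theorem then provides the Poincar\'e inequality with $\kappa_0 \leq 4(B_1\vee B_2)$.

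The main difficulty is the lower bound step in (i): the eigenfunction equations alone do not produce it, and one has to go back to the convolution representation. The twofold role of (\ref{eq:boundderivative}) is what makes the argument tick — it controls the ratios $P\twp_x/\twp_x$ and $P^*\psi/\psi$ from above algebraically, and, through its Lipschitz interpretation for $\log\twp_x$ and $\log\psi$, also produces the local pointwise comparison of $\twp_x(y)$ with $\twp_x(x)$ (and similarly for $\psi$) that is needed to bound the convolution against the positive part of $w$ from below. Once both parts of (i) are in place, step (ii) is essentially automatic from the exponential tail bounds.
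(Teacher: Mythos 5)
Your proof is correct and follows essentially the same route as the paper's: the upper bounds in (i) come from the eigenvalue equations plus (\ref{eq:boundderivative}), the lower bounds from the convolution representation restricted to a region where $w>0$ combined with the Lipschitz bound on $\log\twp_x$ (resp.\ $\log\psi$), and (ii) is exactly the Muckenhoupt computation with the tail bounds (\ref{eq:expdecay}). The only cosmetic difference is that the paper integrates over the half-line $(-\infty,0]$ against $w(y)e^{my}$ (for $x\ge 0$, and symmetrically for $x\le 0$) rather than extracting a uniform lower bound $w_{\min}$ on $[-a,a]$, which sidesteps your inessential appeal to continuity of $w$.
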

\begin{proof}
(I) 
Since $m:= \big\|\frac{\twp_{xx}}{\twp_x}\big\|_{\infty}<\infty$ by (\ref{eq:boundderivative}), $-m\twp_x \leq \twp_{xx} \leq m\twp_x$ and hence for $x, y\geq 0$, $\twp_x(x+y) \geq e^{-my} \twp_x(x)$
and $\twp_x(-x-y) \geq e^{-my} \twp_x(-x)$.
It follows that for $x\geq 0$,
\begin{align*}
	P\twp_x(x) 
	& \geq \inf r \inf q \int_{-\infty}^0 w(y) \twp_x(x-y)dy\\
	& \geq \inf r \inf q \int_{-\infty}^0 w(y) e^{m y}  dy \ \twp_x(x),
\end{align*}
and analogously for $x\leq 0$. Thus, there exists $\delta_1>0$ such that
\[\delta_1 \twp_x(x) \leq P\twp_x(x).\]
Using (\ref{eq:boundderivative}) it follows that there exists $\delta_2>0$ such that
\[P\twp_x = -A\twp_x = f \twp_x -c\twp_{xx}-d\twp_{xxx} \leq \delta_2 \twp_x.\]
It can be proven analogously that there exist $\delta^\ast_1, \delta^\ast_2>0$ such that
\[
\delta^\ast_1 \psi \leq P^*\psi \leq \delta^\ast_2 \psi\, .
\]
Assumption (i) of Theorem \ref{thm:spectralgap} is therefore satisfied.

(II) 
\begin{align*}
	B_1 
	& := \sup_{r>0} \int_r^{\infty} \mu(x) dx \int_0^r \frac{1}{\mu(x)}dx \\
	& \leq  k^2 \int_r^{\infty} e^{-\alpha(x-r)}dx \mu(r)  \int_0^r e^{-\alpha (r-x)} dx \frac{1}{\mu(r)} \leq  \frac{k^2}{\alpha^2},
\end{align*}
and analogously
\[B_2 := \sup_{r>0} \int_{-\infty}^{-r} \mu(x) dx \int_{-r}^0 \frac{1}{\mu(x)} dx \leq  \frac{l^2}{\beta^2}.\]
Using Remark \ref{rem:muckenhoupt}, assumption (ii) of Theorem \ref{thm:spectralgap} is satisfied.
\end{proof}

\begin{remark}\leavevmode
\begin{enumerate}
	\item 
	It was proven in \cite{langMSA} that in the case of the neural field equation with $w(x) = \frac{1}{2\sigma}e^{-\frac{|x|}{\sigma}}$, $\sigma >0$, $\twp_x$ and $\psi$ decay exponentially, and that $\rho$ grows exponentially at a rate smaller than $\frac{1}{\sigma}$. Since $\big\| \frac{w_x}{w}\big\|_{\infty}<\infty$, it follows that in this case (\ref{eq:rho2}) and Assumption \ref{ass:w}, as well as (\ref{eq:expdecay}) and (\ref{eq:boundderivative}) are satisfied.
	\item In \cite{zhaoruan} it is shown in a rather general setting that for $q\equiv 1$ and $w$ satisfying 
	\[\int w(x) e^{\alpha x} dx < \infty\] for all $\alpha \in \R$, $\twp_x$ decays exponentially and the exact rates are given. Existence and exponential decay of the adjoint eigenfunction are also proven. In particular, (\ref{eq:expdecay}) and (\ref{eq:boundderivative}) are satisfied.
	\item If $\twp_x$ and $\psi$ decay exponentially, then $\rho$ (or $\frac{1}{\rho}$, depending on whether $c>0$ or $c<0$) grows exponentially. Thus, if $w$ has compact support and $\sup_{x\in \mathcal{S}} \big|\frac{w_x(x)}{w(x)}\big|<\infty$, or if $w$ decays faster than exponentially, then (\ref{eq:rho2}) and Assumption \ref{ass:w} are satisfied.
\end{enumerate}
\end{remark}

\section{Long-term $L^2$-Stability}
\label{section:L2stab}

In this section we will assume that $d=0$ and that $f\geq\inf f > 0$.
Another measure that is naturally associated with the problem is the symmetrizing measure of the traveling wave operator
\[L v = -fv +  r w\ast (q v)\]
with density
\[\m(x) = \frac{q(x)}{r(x)}.\]
Note that the $L^2(\m)$-norm is equivalent to the $L^2$-norm.

\subsection{Spectral Gap Inequality in $L^2(\m)$ for Small Wave Speeds}
\label{subsection:sgsmallc}

If $c=0$, then $\psi = \frac{1}{Z} \frac{q}{r} \twp_x$, where $Z  = \int \frac{q}{r} \twp_x^2 dx$, and thus  $\m= Z \rho$.
In this case, if the assumptions in Theorem \ref{thm:spectralgap} are satisfied, then $L$ has a spectral gap in $L^2(\m)$.

We can extend the spectral gap for the case $c=0$ to small wave speeds $c$ by a perturbation argument.

\begin{theorem}
\label{thm:spectralgapsmallc}
Assume that Assumption \ref{ass:w} is satisfied and that furthermore
there exists $\kappa_0^0>0$ such that for all $h\in H^1(\mu^0)$,
	\[Var_{\mu^0}(h) \leq \kappa_0^0 \int h_x^2(x) \mu^0(dx),\]
	where $\mu^0 = \frac{1}{Z_{\mu^0}} \frac{q}{r}P\twp_x \twp_x$ with $Z_{\mu^0} = \int \frac{q}{r}P\twp_x\twp_x dx$. 
Then there exists $c^* = c^*(w,f,r,q)>0$ (see (\ref{eq:c*}) for the precise definition) such that if $c=c(w,f,r,q)$ satisfies $|c| \leq c^*$, there exist $\kappa, Z>0$ such that
\begin{equation}
\label{eq:spectralgapsmallc}
\langle L v, v\rangle_{\m} \leq -\kappa \|v\|_{\m}^2 + Z \langle v, \twp_x\rangle_{\m}^2.
\end{equation}
\end{theorem}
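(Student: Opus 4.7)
The plan is to adapt the reformulation of Section~\ref{subsubsection:reformulation} to the operator $L$ on $L^2(\m)$, treating $c$ as a perturbation parameter around $c=0$. Writing $v = h\twp_x$ and using that $d=0$ together with $L^{\#}\twp_x=0$ yields the identity $L\twp_x = L^{\#}\twp_x - c\partial_x\twp_x = -c\twp_{xx}$. I would then decompose
\[
L(h\twp_x) = hL\twp_x + P(h\twp_x) - hP\twp_x = -ch\twp_{xx} + P(h\twp_x) - hP\twp_x,
\]
and integrate against $h\twp_x\m\,dx$. Noting that $\m\twp_x\cdot P\twp_x\,dx = Z_{\mu^0}\mu^0(dx)$ and writing $P(h\twp_x) = P\twp_x\cdot P_0^0 h$ with Markov operator $P_0^0 h := P(h\twp_x)/P\twp_x$ and kernel $p^0(x,y) = w(x-y)q(y)\twp_x(y)/w\ast(q\twp_x)(x)$, one obtains
\[
\langle Lv,v\rangle_\m = -c\int h^2\twp_x\twp_{xx}\m\,dx + Z_{\mu^0}\bigl(\mathbb{E}_{\mu^0}[hP_0^0 h] - \mathbb{E}_{\mu^0}[h^2]\bigr).
\]

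For the nonlocal part I would follow the strategy in the proof of Theorem~\ref{thm:spectralgap}. Expanding $\mathbb{E}_{\mu^0}[hP_0^0 h] = \mathrm{Cov}_{\mu^0}(h,P_0^0 h) + \mathbb{E}_{\mu^0}[h]\mathbb{E}_{\mu^{0,*}}[h]$ with $\mu^{0,*} := \mu^0 P_0^0$, a direct computation gives $\mu^{0,*}=\mu^0$ when $w$ is even (the case covering all examples in Section~\ref{subsection:examples}). Bounding the covariance by AM--GM and applying Lemma~\ref{lem:L2contractivity} to $(\mu^0, p^0)$, which supplies $\gamma := \kappa_0^0 M/(1+\kappa_0^0 M) < 1$ via Assumption~\ref{ass:w}(ii) and the Poincaré hypothesis of the theorem, yields
\[
\mathbb{E}_{\mu^0}[hP_0^0 h] - \mathbb{E}_{\mu^0}[h^2] \leq -\tfrac{1-\gamma}{2}\mathrm{Var}_{\mu^0}(h).
\]
In the general (non-symmetric) case, the same conclusion holds with $\mathrm{Var}_{\mu^0}$ replaced by a combination of $\mathrm{Var}_{\mu^0}$ and $\mathrm{Var}_{\mu^{0,*}}$, which are equivalent under the natural assumption that $\mu^{0,*}$ and $\mu^0$ have bounded Radon--Nikodym derivatives w.r.t.~each other.

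To convert the variance bound into the announced form, I would use the bound $\delta_1\twp_x \leq P\twp_x \leq \delta_2\twp_x$, which here follows from the identity $P\twp_x = f\twp_x - c\twp_{xx}$ together with $f \geq \inf f > 0$ and $\|\twp_{xx}/\twp_x\|_\infty < \infty$ (a standing assumption in applications, cf.~\eqref{eq:boundderivative}), to compare $Z_{\mu^0}\mathrm{Var}_{\mu^0}(h)$ with $\|v\|_\m^2 - \langle v,\twp_x\rangle_\m^2/\|\twp_x\|_\m^2$ (the minimum over constants $\bar h$ of $\int(h-\bar h)^2\m\twp_x^2\,dx$), obtaining a lower bound proportional to the latter. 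The perturbation term is bounded trivially by $|c|\,\|\twp_{xx}/\twp_x\|_\infty\,\|v\|_\m^2$. Assembling everything gives
\[
\langle Lv,v\rangle_\m \leq -\Bigl[\tfrac{(1-\gamma)\delta_1}{2} - |c|\,\|\twp_{xx}/\twp_x\|_\infty\Bigr]\|v\|_\m^2 + \tfrac{(1-\gamma)\delta_1}{2\|\twp_x\|_\m^2}\langle v,\twp_x\rangle_\m^2,
\]
so that setting
\[
c^* := \frac{(1-\gamma)\delta_1}{4\,\|\twp_{xx}/\twp_x\|_\infty}
\]
yields \eqref{eq:spectralgapsmallc} with explicit $\kappa,Z > 0$ whenever $|c|\leq c^*$.

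The main obstacle I anticipate lies in precisely tracking the constants across the three probability measures ($\m\twp_x^2\,dx$, $\mu^0$, and in general $\mu^{0,*}$) and, in the non-symmetric case, controlling the extra cross term $\mathbb{E}_{\mu^0}[h]\bigl(\mathbb{E}_{\mu^{0,*}}[h] - \mathbb{E}_{\mu^0}[h]\bigr)$ that appears in the expansion --- unlike in Section~\ref{subsubsection:reformulation}, the local part $-f$ of $L$ produces no integration-by-parts correction, so the symmetric splitting into $\mathrm{Var}_\mu$ plus $\mathrm{Var}_{\mu^*}$ that appeared in \eqref{Stannat3} is not automatic. The perturbation estimate in $c$ itself is elementary and requires only the a~priori bound $\|\twp_{xx}/\twp_x\|_\infty<\infty$.
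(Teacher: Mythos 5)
Your argument is correct (under the symmetry assumption discussed below) but follows a genuinely different route from the paper. You perturb the quadratic form directly, via the identity $L(h\twp_x)=hL\twp_x+P(h\twp_x)-hP\twp_x=-ch\twp_{xx}+P(h\twp_x)-hP\twp_x$, and then rerun the variance/contractivity argument for the pair $(\mu^0,p_0)$; the wave speed enters only through the explicit remainder $-c\int h^2\twp_x\twp_{xx}\m\,dx$ and through $\delta_1$ in $P\twp_x=f\twp_x-c\twp_{xx}\geq\delta_1\twp_x$. The paper instead builds an auxiliary operator $L^0v=-fv+P^0v$ with $p^0(x,y)=p(x,y)\twp_x(y)/\varphi^0(y)$ and $\varphi^0=P\twp_x/f$, so that $P^0\varphi^0=f\varphi^0$ and $P^{0,*}\psi^0=f\psi^0$ hold \emph{exactly}; Theorem \ref{thm:spectralgap} then applies verbatim to $L^0$ in $L^2(\rho^0)$, and the perturbation is pushed entirely into the comparison $\m=Z_0\frac{\varphi^0}{\twp_x}\rho^0$ with $\frac{\varphi^0}{\twp_x}=1-c\frac{\twp_{xx}}{f\twp_x}$. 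The payoff of the paper's construction is precisely the point you flag as your main obstacle: because the eigen-equations for $L^0$ are exact, the two measures $\frac{1}{Z}P^0\varphi^0\psi^0$ and $\frac{1}{Z}\varphi^0P^{0,*}\psi^0$ coincide automatically, so no cross term $E_{\mu^0}(h)\big(E_{\mu^{0,*}}(h)-E_{\mu^0}(h)\big)$ ever appears and no ad hoc splitting of the diagonal term is needed. Your shortcut $\mu^{0,*}=\mu^0$ for even $w$ is a correct computation, and it is not a real loss of generality relative to the paper: the identification $\psi^0=\frac{1}{Z^0}\frac{q}{r}\twp_x$ used in the paper's proof (and indeed the premise that $\m=q/r$ symmetrizes $L$) itself requires $w$ to be even, so Section \ref{section:L2stab} implicitly works in that case throughout — you are simply more explicit about it. Two minor remarks: your $c^*$ is implicit since $\delta_1$ and $\|\twp_{xx}/\twp_x\|_\infty$ depend on $c$ through $\twp$, but the paper's $c^*$ in (\ref{eq:c*}) has exactly the same character; and the finiteness of $\|\twp_{xx}/\twp_x\|_\infty$ is likewise implicitly required by the paper's $\kappa(c)$.
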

\begin{proof}
Set $\varphi^0 = \frac{P\twp_x}{f}$ and $P^0v = \int p^0(x,y) v(y) dy$ where $p^0(x,y) = p(x,y) \frac{\twp_x(y)}{\varphi^0(y)}$.
Then 
$$
P^0\varphi^0 (x) = \int p^0 (x,y) \varphi^0 (y)\, dy  = \int p (x,y) \twp_x (y) \, dy  
= P\twp_x (x) = f\varphi^0 (x)\, ,   
$$ 
so that $L^0\varphi^0=0$, where
\[ 
L^0 v = -fv + P^0v, \qquad D(L^0)=L^2\, . 
\]

The $L^2$-adjoint operator is given by 
$L^{0,\ast} v = - fv + \frac{\twp_x}{\varphi^0} P^\ast v$ with eigenfunction 
$\psi^0 = \frac{1}{Z^0} \frac{q}{r}\twp_x$ corresponding to the eigenvalue $0$. Here, 
$$ 
Z^0=\int \frac{q}{r}\twp_x\varphi^0 dx = \int \frac{q}{rf} P\twp_x\twp_x dx \, . 
$$
We want to apply Theorem \ref{thm:spectralgap} with $L^0$ replacing $L^\#$ and $\varphi^0$ 
replacing $\twp_x$. In particular, $\rho^0 = \frac{\psi^0}{\varphi^0}$, so that 
$$  
\int L^0 (h\varphi^0) h\varphi^0 \rho^0 \, dx 
\le \frac{Z_\mu^0}2 \mbox{Var}_{\mu^0} (P_0^0 h) - \frac{Z_\mu^0}2 \mbox{Var}_{\mu^{0, \ast}} (h)   
\, , 
$$ 
with $P_0^0 h = \frac{P_0 (h\varphi^0 )}{\varphi^0} = \frac{P(h\twp_x)}{P\twp_x}$, which 
coincides with $P_0 h$ of the previous section. 

\medskip 
\noindent
Note that in this case $P^0 \varphi^0 = f\varphi^0$ and $P^{0,\ast}\psi^0 = f\psi^0$, so that 
assumption (i) of Theorem \ref{thm:spectralgap} is trivially satisfied with $\delta_1 = \inf f$   
and $\delta_2 = \sup f$. It follows that there exists $\kappa_0 > 0$ such that 
\begin{equation}
\label{eq:spectralgap0}
\langle L^0v, v\rangle_{\rho^0} \leq - \kappa_0 \big(\|v\|_{\rho^0}^2 - \langle v, \varphi^0\rangle_{\rho^0}^2 \big).
\end{equation}
Since 
$$
\m = \frac qr = Z_0\frac{\psi^0}{\twp_x} = Z_0\frac{\varphi^0}{\twp_x}\rho^0 
$$ 
and therefore 
\begin{align*}
\langle L v, v\rangle_{\m}
& = \langle -f v + Pv, v \rangle_{\m} = \Big\langle -f v + P^0 \Big(\frac{\varphi^0}{\twp_x}v\Big) , v \Big\rangle_{\m} \\
& = Z_0 \Big\langle -f \frac{\varphi^0}{\twp_x} v + P^0\Big(\frac{\varphi^0}{\twp_x} v\Big), v\frac{\varphi^0}{\twp_x}\Big\rangle_{\rho^0} - \Big\langle f \Big( 1-\frac{\varphi^0}{\twp_x}\Big) v, v\Big\rangle_{\m}\\
& = Z_0\Big\langle L^0 \frac{\varphi^0}{\twp_x} v, \frac{\varphi^0}{\twp_x} v \Big\rangle_{\rho^0} - \Big\langle f \Big( 1-\frac{\varphi^0}{\twp_x}\Big) v, v\Big\rangle_{\m}\, , 
\end{align*} 
it follows that 
\begin{align*}
\langle L v, v\rangle_{\m}
& \leq -\kappa_0  Z_0\Big\| v \frac{\varphi^0}{\twp_x} \Big\|_{\rho^0}^2 + \kappa_0 Z_0\Big\langle v \frac{\varphi^0}{\twp_x}, \varphi^0 \Big\rangle_{\rho^0}^2 - \Big\langle f \Big( 1-\frac{\varphi^0}{\twp_x}\Big) v, v\Big\rangle_{\m}\\
& = - \kappa_0 \int v^2 \frac{\varphi^0}{\twp_x} \m dx + \frac{\kappa_0}{Z^0} \bigg(\int v\varphi^0 \m dx \bigg)^2 - \int f \Big( 1-\frac{\varphi^0}{\twp_x}\Big) v^2 \m dx\\
& = - \kappa_0 \|v\|_{\m}^2 + \int v^2 \Big( 1-\frac{\varphi^0}{\twp_x}\Big) (\kappa_0-f) \m dx + \frac{\kappa_0}{Z^0} \bigg( \int v \varphi^0 \m dx\bigg)^2.
\end{align*}
Now 
\begin{align*}
\int v^2 \Big( 1-\frac{\varphi^0}{\twp_x}\Big) (\kappa_0-f) \m dx 
& \leq \Big\|\Big( 1-\frac{\varphi^0}{\twp_x} \Big)(\kappa_0-f)\Big\|_{\infty} \|v\|_{\m}^2
= \Big\| c \frac{\twp_{xx}}{\twp_x f} (\kappa_0-f)\Big\|_{\infty} \|v\|^2_{\m}
\end{align*}
and
\begin{align*}
	\bigg( \int v \varphi^0 \m dx\bigg)^2
	& = \Big( \int v \Big(\twp_x - c\frac{\twp_{xx}}{f}\Big) \m dx\Big)^2 \\
	& \leq 2 \langle v, \twp_x\rangle_{\m}^2 + 2c^2 \big\langle v, \frac{\twp_{xx}}{f}\big\rangle_{\m}^2 \leq  2 \langle v, \twp_x\rangle_{\m}^2 + 2c^2  \|v\|_{\m}^2 \Big\|\frac{\twp_{xx}}{f}\Big\|_{\m}^2.
\end{align*}
It follows that
\[\langle L v, v \rangle_{\m}  \leq - \kappa(c) \|v\|_{\m}^2 + 2\frac{\kappa_0}{Z^0}\langle v, \twp_x\rangle_{\m}^2, \]
where
\[\kappa(c) = \kappa_0 \Big( 1-\frac{2c^2}{Z^0} \Big\| \frac{\twp_{xx}}{f}\Big\|_{\m}^2 \Big) - |c| \Big\| \frac{\twp_{xx}}{\twp_x f} (\kappa_0-f)\Big\|_{\infty}.\]
Note that $\kappa(c) \xrightarrow{c \rightarrow 0} \kappa_0>0$.
Set 
\begin{equation}
\label{eq:c*}
c^* = \min \{ |c|: \kappa(c) \leq 0 \}.
\end{equation} 
Then (\ref{eq:spectralgapsmallc}) is satisfied with $\kappa= \kappa(c)$ if $|c|\leq c^*$.
\end{proof}

Note that $c$, $\kappa(c)$, $c^*$ are usually unknown variables depending on $w, f, q, r$. 
It is a priori not clear that there exists a setting in which Theorem \ref{thm:spectralgapsmallc} applies. This can be clarified in the neural field example (\ref{eq:nfe}). Consider the neural field traveling wave operator
\[Lv = -v + w\ast(F'(\twp)v)\]
for some kernel $w$ satisfying $M:=\big\|\frac{w_x}{w}\|_{\infty} < \infty$ and some gain function $F$ and the corresponding traveling wave $(\twp,c)$. We define an associated standing wave in the following way. Set $\twp^0 = w\ast F(\twp)$ and $F^0(x) = F(\twp((\twp^0)^{-1}(x)))$ (since $\twp^0$ is increasing, $(\twp^0)^{-1}$ is well-defined). Then $\twp^0 = w\ast F^0(\twp^0)$ is the traveling wave solution to the neural field equation with kernel $w$ and gain function $F^0$, and $\twp^0_x$ is the eigenfunction to the eigenvalue $0$ of $L^0$, where
\[L^0v = -v + w\ast((F^0)'(\twp^0)v) = -v + w\ast \Big(F'(\twp)\frac{\twp_x}{\twp^0_x}v\Big).\]
(Note that, in the notation of the proof of Theorem \ref{thm:spectralgapsmallc}, $\twp^0_x = \varphi^0$.) \sloppy{Since ${\twp^0 = w\ast F(\twp) = \twp-c\twp_x}$, we have}
\begin{equation}
\label{eq:perturbtwp0}
\twp(x) = (I-c\partial_x)^{-1} \twp^0 = \int_0^{\infty} e^{-s} \twp^0(x+cs) ds.
\end{equation}
In this setting, Theorem \ref{thm:spectralgapsmallc} therefore tells us the following. Assume that $L^0$ satisfies a spectral gap inequality in $L^2(\m^0)$ with constant $\kappa_0$.
Set
\[\kappa(c) = \kappa_0 \Big( 1-\frac{2c^2}{Z^0}\| \twp_{xx}\|_{\m}^2 \Big) - |c| \Big\| \frac{\twp_{xx}}{\twp_x} (\kappa_0-1)\Big\|_{\infty}.\]
Since $\big\|\frac{w_x}{w}\big\|_{\infty}=M$ and
\[|\twp_{xx}(x)| = \bigg| \int_0^{\infty} e^{-s} w_x\ast(F'(\twp)\twp_x)(x+cs) ds \bigg| \leq M \twp_x(x),\]
it follows that 
\[\frac{\| \twp_{xx}\|_{\m}^2}{Z^0} = \frac{\int \twp_{xx}^2 F'(\twp) dx}{\int \twp^0_x \twp_x F'(\twp) dx} \leq \frac{M^2}{1-|c|M}\]
and 
\[\kappa(c) \geq \kappa_0 \Big( 1-\frac{2c^2M^2}{1-|c|M}\Big) - |c|M|\kappa_0-1|.\]
Then for all $c$ satisfying
\begin{equation}
\label{eq:smallccondtw}
\kappa_0 \Big( 1-\frac{2c^2M^2}{1-|c|M}\Big) - |c|M|\kappa_0-1|>0,
\end{equation}
the traveling wave operator associated with $\twp$ as defined in (\ref{eq:perturbtwp0}) (that is, the operator with kernel $w$ and gain function $F(x) = F^0(\twp^0(\twp^{-1}(x)))$), satisfies a spectral gap inequality in $L^2(\m)$.

The wave speed $c$ is usually unknown. It would be desirable to express the smallness condition on $c$ in terms of the parameters of the system. As an example, we consider in the next subsection the neural field equation (\ref{eq:nfe}) with synaptic connections described by a two-sided exponential kernel, $w(x) = \frac{1}{2\sigma} e^{-\frac{|x|}{\sigma}}$ for some $\sigma>0$. It is possible to explicitly bound $\kappa_0$ in terms of $F, \sigma, $ and $c$, see \cite{langthesis} for details. 
Furthermore, we can derive bounds on the wave speed in terms of $w$ and $F$ (see Proposition \ref{prop:boundsc}). Together, this allows to translate the smallness condition on $c$ into a condition on the parameters of the system.

\subsection{Bounds on the Wave Speed}
\label{subsection:boundsc}

In \cite{ermentroutmcleod}, Thm. 3.1, Ermentrout and McLeod proved that
\begin{equation}
\label{eq:c}
c = \frac{\int_{a_1}^{a_2}x-F(x) dx}{\int \twp_x^2(x) F'(\twp(x)) dx},
\end{equation}
where $a_1=F(a_1)$ and $a_2=F(a_2)$ are the two stable fixed points of the neural field equation.
We can use this representation to deduce the following lower and an upper bound on $c$.

\begin{proposition}
\label{prop:boundsc}
Assume that $F$ is convex-concave, that is, there exists $z$ such that $F''(x) \geq 0$ for $x\leq z$ and $F''(x) \leq 0 $ for $x\geq z$.
Then the wave speed is bounded in terms of the parameters of the system, $\sigma$ and $F$:
\[\frac{\sigma}{\sqrt{2}(a_2 - a_1)} \frac{\int_{a_1}^{a_2} x-F(x) dx}{\sqrt{\int_{a_1}^a x-F(x) dx}} \leq c \leq \frac{\sigma}{4} \frac{\int_{a_1}^{a_2} x-F(x) dx}{\int_a^{a_2} F(x)-x dx} \]
\end{proposition}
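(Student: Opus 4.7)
The starting point is the Ermentrout–McLeod formula (\ref{eq:c}) expressing $c = N/D$ with $N := \int_{a_1}^{a_2}(u-F(u))\,du$ and $D := \int \twp_x^2 F'(\twp)\,dx > 0$, so both bounds reduce to two-sided estimates on $D$. The plan exploits that $w(x)=\frac{1}{2\sigma}e^{-|x|/\sigma}$ is the Green's function of $I-\sigma^2\partial_{xx}$, so applying this operator to the traveling wave equation $\twp - c\twp_x = w\ast F(\twp)$ yields the local third-order ODE
\[
\twp - c\twp_x - \sigma^2\twp_{xx} + c\sigma^2\twp_{xxx} = F(\twp),
\]
and a single differentiation followed by multiplication by $\twp_x$ and integration over $\R$ produces the Pohozaev-type identity $D = \|\twp_x\|^2 + \sigma^2\|\twp_{xx}\|^2$.

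Introduce the potential $V(u) := \int_{a_1}^u(s-F(s))\,ds$, so $V_-:=V(a)=\int_{a_1}^a(u-F)\,du$, $V(a_2)=N$, and $V_+:=\int_a^{a_2}(F-u)\,du = V_--N$, and the Lyapunov functional
\[
\tilde E(x) := \tfrac{\sigma^2}{2}\twp_x^2 - V(\twp) - c\sigma^2\,\twp_x\twp_{xx}.
\]
Using the local ODE a direct computation gives $\tilde E'(x) = -c(\twp_x^2 + \sigma^2\twp_{xx}^2)$, so $\tilde E$ is nonincreasing for $c\geq 0$, with $\tilde E(-\infty)=0$ and $\tilde E(+\infty)=-N$. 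For the lower bound on $c$, at the maximizer $x^*$ of $\twp_x$ one has $\twp_{xx}(x^*)=0$, so with $M:=\sup\twp_x$ the estimate $\tilde E(x^*)\leq 0$ gives $\sigma^2 M^2/2 \leq V(\twp(x^*)) \leq V_-$, hence $M\leq\sqrt{2V_-}/\sigma$ since $V$ is maximized on $[a_1,a_2]$ at $a$. Changing variables $u=\twp(x)$ in $D$ yields
\[
D = \int_{a_1}^{a_2}\twp_x(\twp^{-1}(u))\,F'(u)\,du \leq M\,(a_2-a_1) \leq \tfrac{\sqrt{2}\,(a_2-a_1)\sqrt{V_-}}{\sigma},
\]
from which $c=N/D \geq \sigma N/[\sqrt 2(a_2-a_1)\sqrt{V_-}]$ follows.

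For the upper bound an AM-GM computation on the Pohozaev identity gives
\[
D = \|\twp_x\|^2 + \sigma^2\|\twp_{xx}\|^2 \geq 2\sigma\int|\twp_x\twp_{xx}|\,dx = \sigma\cdot\mathrm{TV}(\twp_x^2) = 2\sigma M^2,
\]
so it suffices to establish the reverse estimate $\sigma^2 M^2/2 \geq V_+$; granting this, $D \geq 2\sigma M^2 \geq 4V_+/\sigma$ and hence $c=N/D \leq \sigma N/(4V_+)$. This reverse bound is the main obstacle: evaluating the local ODE at $x^*$ (where $\twp_{xx}=0$ and $\twp_{xxx}\leq 0$) forces $u_0:=\twp(x^*) - F(\twp(x^*)) = c(M-\sigma^2\twp_{xxx}(x^*)) > 0$ when $c>0$, i.e. $u_0 < a$ strictly, so the direct consequence $\sigma^2 M^2/2 \geq V(u_0) - N$ of $\tilde E(x^*)\geq -N$ is strictly weaker than $V_+ = V_- - N$. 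Closing this gap requires combining the Lyapunov information at $x^*$ and at $x_a:=\twp^{-1}(a)$ with the identity
\[
V_+ = \tfrac{\sigma^2}{2}\twp_x(x_a)^2 - c\Bigl(\!\int_{x_a}^{\infty}\!\twp_x^2\,dx + \sigma^2\!\int_{x_a}^{\infty}\!\twp_{xx}^2\,dx + \sigma^2\twp_{xx}(x_a)\twp_x(x_a)\Bigr)
\]
obtained by multiplying the local ODE by $\twp_x$ and integrating on $[x_a,\infty)$, and using the convex-concavity of $F$ to sign the boundary correction $\twp_{xx}(x_a)\twp_x(x_a)$ (which controls the sign of the $c$-dependent remainder and allows the bound $V_+ \leq \sigma^2 p^2/2$ with $p=\twp_x(x_a)$ to propagate to $M$).
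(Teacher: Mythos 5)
Your route is genuinely different from the paper's and is partly successful. The paper works directly with the convolution structure: it uses $\twp_x(x)\ge e^{-|x-x_0|/\sigma}\twp_x(x_0)$ to get $D\ge 2\sigma\twp_x^2(x_0)$, and the pointwise inequality $-\sigma^2\twp_{xx}=\int_0^\infty e^{-s}F(\twp(\cdot+cs))\,ds-\twp\ge F(\twp)-\twp$ (valid for $c\ge0$), integrated against $\twp_x$ over half-lines, to sandwich $\twp_x^2$ between $\tfrac{2}{\sigma^2}$ times the relevant potential differences. Your localization to the third-order ODE, the identity $D=\|\twp_x\|^2+\sigma^2\|\twp_{xx}\|^2$, and the Lyapunov functional $\tilde E$ with $\tilde E'=-c(\twp_x^2+\sigma^2\twp_{xx}^2)$ are all correct (and $\int_{\mathbb R}\tilde E'\,dx=-N$ even re-derives the Ermentrout--McLeod formula). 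Your lower bound on $c$ is complete: $\tilde E\le 0$ at the maximizer of $\twp_x$ gives $M\le\sqrt{2V_-}/\sigma$, hence $D\le M(a_2-a_1)$, exactly as claimed.

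The upper bound, however, has a genuine gap that you flag but do not close, and the closing mechanism you describe does not work as stated. After $D\ge 2\sigma M^2$ everything hinges on $\tfrac{\sigma^2}{2}M^2\ge V_+$, for which you invoke the identity $V_+=\tfrac{\sigma^2}{2}p^2-c\bigl(\int_{x_a}^\infty(\twp_x^2+\sigma^2\twp_{xx}^2)\,dx+\sigma^2\twp_{xx}(x_a)p\bigr)$ and propose to ``sign the boundary correction''. But convex-concavity gives $\twp_{xx}(x_a)\le 0$ (since $\twp(x_0)<a$ forces $x_0<x_a$), so $\sigma^2\twp_{xx}(x_a)p$ is precisely the \emph{unfavourable} contribution: sign information alone leaves the bracket of undetermined sign, and you need a quantitative domination to conclude $V_+\le\tfrac{\sigma^2}{2}p^2$. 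This can be supplied --- AM--GM on $[x_a,\infty)$, where $\twp_{xx}\le0$, gives $\int_{x_a}^\infty(\twp_x^2+\sigma^2\twp_{xx}^2)\,dx\ge -2\sigma\int_{x_a}^\infty\twp_x\twp_{xx}\,dx=\sigma p^2$, while $|\twp_{xx}|\le\twp_x/\sigma$ (from $\twp_{xx}=\int_0^\infty e^{-s}\,w_x\ast(F'(\twp)\twp_x)(\cdot+cs)\,ds$ and $|w_x|=w/\sigma$) gives $|\sigma^2\twp_{xx}(x_a)p|\le\sigma p^2$, so the bracket is nonnegative --- but neither estimate appears in your write-up. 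Alternatively, the paper's one-line pointwise inequality $-\sigma^2\twp_{xx}\ge F(\twp)-\twp$, integrated against $\twp_x$ over $[x_a,\infty)$, yields $\tfrac{\sigma^2}{2}p^2\ge V_+$ immediately. Until one of these steps is added, the upper bound is not proved. (Both your argument and the paper's also tacitly assume $c>0$, i.e.\ $N>0$.)
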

\begin{proof}
	Since $F$ is convex-concave, it can be proven as in Lemma 4.1 in \cite{langMSA} that there exists a unique $x_0$ such that $\twp_{xx}(x_0)=0$ and $\twp_{xx}(x)\geq 0$ for $x\leq x_0$ and $\twp_{xx}(x)\leq 0$ for $x\geq x_0$ (see \cite{langthesis} for details).
	
	We first prove the upper bound. Since $|\twp_{xx}|\leq \frac{1}{\sigma} \twp_x$, we have that
	\[\twp_x(x) \geq e^{- \frac{|x-x_0|}{\sigma}} \twp_x(x_0).\]
	This implies that
	\begin{align*}
		\int \twp_x^2(x) F'(\twp(x)) dx & \geq \int e^{-\frac{|x-x_0|}{\sigma}} \twp_x(x) \twp_x(x_0) F'(\twp(x)) dx \\
		&  = 2 \sigma w*(F'(\twp)\twp_x)(x_0) \twp_x(x_0)  = 2 \sigma (\twp_x(x_0) -c\twp_{xx}(x_0)) \twp_x(x_0) \\
		& =  2\sigma \twp_x^2(x_0).
	\end{align*}
	Since $\sigma^2 \twp_{xx} = \twp - \int_0^{\infty} e^{-s} F(\twp(\cdot+cs)) ds$ we obtain
	\begin{align*}
		\twp_x^2(x) &= 2 \int_x^{\infty} -\twp_{xx}(y) \twp_x(y) dy  = 2 \int_x^{\infty} - \frac{\twp(y) - \int_0^{\infty} e^{-s} F(\twp(y+cs) ds}{\sigma^2} \twp_x(y) dy \\
		& \geq 2 \int_x^{\infty} - \frac{\twp(y)-F(\twp(y))}{\sigma^2} \twp_x(y) dy = \frac{2}{\sigma^2} \int_{\twp(x)}^{a_2} F(x)-x \ dx.
	\end{align*}
	Therefore
	\[\twp_x^2(x_0) = \max \twp_x^2(x) \geq \twp_x^2(\twp^{-1}(a)) \geq \frac{2}{\sigma^2} \int_a^{a_2} F(x) - x \ dx.\]
	Using (\ref{eq:c}), we obtain
	\[ c = \frac{\int_{a_1}^{a_2}x-F(x) dx}{\int \twp_x^2(x) F'(\twp(x)) dx} \leq \frac{1}{2\sigma} \frac{\int_{a_1}^{a_2}x-F(x) dx}{\twp_x^2(x_0)} \leq \frac{\sigma}4 \frac{\int_{a_1}^{a_2}x-F(x) dx}{\int_a^{a_2} F(x)-x dx}.\]
	This yields the upper bound.
	
	We now prove the lower bound.
	We have
	\begin{align*}
		\twp_x^2(x) & = 2 \int_{-\infty}^x \twp_x(y) \twp_{xx}(y) dy = \frac{2}{\sigma^2} \int_{-\infty}^x \left(\twp(y) - \int_0^{\infty} e^{-s} F(\twp(y+cs)) ds \right) \twp_x(y) dy \\
		& \leq \frac{2}{\sigma^2} \int_{-\infty}^x (\twp(y) - F(\twp(y))) \twp_x(y) dy  = \frac{2}{\sigma^2} \int_{a_1}^{\twp(x)} x-F(x) dx.
	\end{align*}
	Since
	\[0=\twp_{xx}(x_0) = \frac{1}{\sigma^2} \left(\twp(x_0) - \int_0^{\infty} e^{-s} F(\twp(x_0+cs)) ds\right) \]
	it follows that 
	\[\twp(x_0) = \int_0^{\infty} e^{-s} F(\twp(x_0+cs)) ds > F(\twp(x_0))\]
	and hence $ \twp(x_0) < a$, so that
	\[ \twp_x^2(x_0) \leq \frac{2}{\sigma^2} \int_{a_1}^{\twp(x_0)} x-F(x) dx \leq \frac{2}{\sigma^2} \int_{a_1}^a x-F(x) dx.\]
	Thus, using (\ref{eq:c}),
	\[c \geq \frac{\int_{a_1}^{a_2} x-F(x) dx}{\twp_x(x_0)\int F'(\twp(x)) \twp_x(x)  dx} \geq \frac{\sigma}{\sqrt{2}} \frac{\int_{a_1}^{a_2} x-F(x) dx}{\sqrt{\int_{a_1}^a x-F(x) dx}\, (F(a_2) - F(a_1))}\, , \]
which implies the lower bound, since $F(a_2) = a_2$ and $F(a_1) = a_1$. 
\end{proof}

\subsection{Stochastic Long-term Stability}
\label{subsection:longtermstab}

In this subsection we will stick to the neural field example. We show how the $L^2(\m)$-spectral gap inequality can be used to derive a long-term stochastic stability result. 
We consider the stochastic neural field equation
\begin{equation} 
\label{StochNFEq} 
du(x,t) = \big(-u(x,t) + w\ast F(u(\cdot,t))(x) \big) dt + \Sigma (u(t))\, dW(x,t), 
\end{equation} 
where $W$ is a cylindrical Wiener process with values in the Hilbert space $L^2$, 
defined on some underlying filtered probability space $(\Omega , \mathcal F, (\mathcal 
F(t))_{t\ge 0}, P)$ (see the monograph \cite{daprato, prevotroeckner}). 
The dispersion coefficient $\Sigma$, describing the standard deviation of the noise term, is 
assumed to be a function of the $L^2$-distance $\inf_{C\in\mathbb R} \|u-\twp (\cdot + C)\|$ of 
$u$ to the set $\mathcal N$ of traveling waves, i.e., $\Sigma (u) = \Sigma (u-\twp (\cdot + C))$ 
for all $C\in\Bbb R$.

\medskip 
\noindent 
A rigorous meaning to equation \eqref{StochNFEq} is given by decomposing $u(t) = v(t) + \twp (t)$ 
w.r.t. the traveling wave. The stochastic evolution equation for $v$ is then given by 
\begin{equation} 
\label{StochRelNFEq} 
\begin{aligned} 
dv(x,t) & = \left( - v(x,t) + w\ast \left( F(v (\cdot , t) + \twp (\cdot ,t))  
 - F(\twp (\cdot , t)) \right)\right)\, dt   + \Sigma (u (t))\, dW(t) 
\end{aligned} 
\end{equation} 
and we will now make the following assumptions on $\Sigma$. $\Sigma : L^2 \mapsto L_2 (L^2, L^2)$ 
satisfies 
\begin{equation}
\label{invariance} 
\Sigma (v) = \Sigma (v + \twp  - \twp (\cdot + C)) \mbox{ for any } C\in\Bbb R \, , \Sigma (0) = 0 
\end{equation}   
and $\Sigma$ is Lipschitz continuous 
\begin{equation}
\label{noise}
\|\Sigma (v_1 ) - \Sigma (v_2)\|^2_{L_2 (L^2,L^2)}\le \sigma^2\|v_1 - v_2\|^2\, . 
\end{equation} 
for some constant $\sigma^2$. Here, $L_2 (L^2,L^2)$ denotes the space of all Hilbert-Schmidt 
operators $L: L^2\mapsto L^2$ 
with the Hilbert-Schmidt norm $\|L\|_{L_2 (L^2,L^2)}^2 = \sum_k \|Le_k\|^2_H$ for one (hence any) 
complete orthonormal system $(e_k)_k$ of $L^2$. 

\medskip 
\noindent 
Standard theory on stochastic evolution equations now implies the existence and uniqueness of 
strong solutions $v(x,t)$ of \eqref{StochRelNFEq} for arbitrary initial condition $v_0\in L^2$, 
so that $u(x,t) = v(x,t) + \twp (x,t)$ is a strong solution of \eqref{StochNFEq}. 

\medskip 
\noindent 
As in \cite{kruegerstannat, langMSA}, we account for shifts in the phase of the wave by dynamically adapting the speed of a reference wave according to
\[
\dot{C}^m(t) = -m \langle u-\twp(\cdot-ct-C^m(t)), \twp_x(\cdot-ct-C^m(t))\rangle_{\m_t}, \qquad C^m(0) = 0, 
\]
where $m$ is a parameter determining the rate of relaxation to the right phase, and where 
\[
\m_t(x) = \m(x-ct-C^m(t)). 
\]
Here we move the measure with the wave such that $\|\twp (x-ct-C^m(t))\|_{\m_t} = \|\twp\|_{\m}$ 
for all $t\geq 0$. Set $\tilde{u}(x,t) = \twp(x-ct-C^m(t))$. $\tilde{v}:= u(x,t) - \tilde{u}
(x,t)$ satisfies
\[
d\tilde{v}(x,t) = \big( L_t \tilde{v}(x,t) + R(t,\tilde{v}) + \dot{C}^m(t) \twp_x(x-ct-C^m(t))\big) dt + \Sigma (\tilde{v}) dW(x,t),
\]
where $L_t$ is the family of time-dependent uniformly bounded operators 
\[
L_t v = -v + w\ast (F'(\twp(\cdot-ct))v), \qquad D(L_t) = L^2, 
\]
and where the rest term is given as 
\[
R(t,v) = w\ast F(\twp(\cdot-ct)+v)-F(\twp(\cdot-ct)) - w\ast (F'(\twp(\cdot -ct)) v). 
\]
By Taylor's formula, there exists $\xi(y,t)$ such that
\[
R(t,v) = \frac{1}{2} \int w(x-y) \Big( F''(\twp(y-ct)+\xi(y,t)) v^2(y)\Big) dy  
\]
and Cauchy-Schwarz inequality implies that 
\begin{equation}
\label{eq:estimateRtilde}
\begin{split}
\|R(t,v)\|_{\m_t}^2
& \leq \frac{1}{4} \|w\|_{\infty} \|F''\|_{\infty}^2 \|v\|^2 \int \int w(x-y) v^2(y) dy\,  \m_t (x) dx \\
& \leq \frac{1}{4} \|w\|_{\infty} \|F''\|_{\infty}^2  \frac{\|\m\|_{\infty}}{\min |\m|^2} \|v\|_{\m_t}^4 =: M_R^2 \|v\|_{\m_t}^4.
\end{split}
\end{equation}
The rest term $R(t,v)$ is thus of higher order in $\|v\|$. It can therefore be expected that the stability properties of the traveling wave depend only on the linear operator $L$.

\medskip 
\noindent 
Note that we cannot expect to have the same control over the rest term in $L^2(\rho)$. As stated in \cite{langMSA}, if $c>0$, then typically there exists $L_{\rho}>0$ such that $\rho(y) \leq L_{\rho} \rho(x)$ for $y\leq x$, and $\lim_{x\rightarrow -\infty} \rho(x) = 0$. Now assume that there exists $C>0$ such that for $v\in L^2(\rho)$,
\[\|w\ast v^2\|^2_{\rho} = \int \int \int w(x-y_1) w(x-y_2) v^2(y_1) v^2(y_2) \rho(x) dy_1 dy_2 dx \leq C \|v\|^4_{\rho}. \]
Formally, letting $v^2\rightarrow  \delta_y$ we obtain
\[\int w^2(x-y) \rho(x) dx \leq C \rho^2(y).\]
But 
\[\frac{\int w^2(x-y) \rho(x) dx }{\rho^2(y)} \geq \frac{\int_y^{\infty} w^2(x-y) dx \rho(y)}{L_{\rho}\rho^2(y)} = \frac{\int_0^{\infty} w^2(x) dx}{L_{\rho}\rho(y)} \xrightarrow{y\rightarrow -\infty} \infty,\]
which is a contradiction.

If the traveling wave operator $L=L_0$ satisfies a spectral gap inequality in $L^2(\m)$, then we can derive a time-uniform bound for $\tilde{v}(t)$. 

\begin{theorem}
\label{thm:stabilityL2m}
Assume that the traveling wave operator $L$ satisfies a spectral gap inequality in $L^2(\m)$,
\[\langle Lv, v\rangle_{\m} \leq - \kappa \|v\|_{\m}^2 + Z \langle v, \twp_x \rangle_{\m}^2\]
for some $\kappa, Z >0$.
Assume that 
$\|c \frac{\m_x}{\m}\|_{\infty} + \sigma^2 \|\m\|_\infty \|\m^{-1}\|_\infty < \kappa$ and $m>Z$.
Set $b^* = \frac{\kappa}{2 M_R + m\|\twp_x\|_{\m}\|\frac{\m_x}{\m}\|_{\infty} }$ and
\[\tau:= \inf \left\{ t\geq 0: \|\tilde{v}(t)\|_{\m_t} \geq b^* \right\}.\]
Then 
\[
P(\tau<\infty) \leq  \frac{1}{(b^*)^2} \|\tilde{v}(0)\|^2_{\m}  . 
\]
\end{theorem}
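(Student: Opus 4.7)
The plan is to apply Itô's formula to $t \mapsto \|\tilde v(t)\|^2_{\m_t}$ and, after stopping at $\tau$, extract a nonnegative supermartingale to which Doob's maximal inequality applies. Since $\m_t$ depends on both $t$ and the random phase $C^m(t)$, I would combine the Itô formula for $\tilde v$ with the chain rule in the second argument of $F(v,a) := \int v^2\, \m(\cdot - a)\,dx$. The resulting drift splits into: (a) the contribution $\int \tilde v^2 \partial_t \m_t\, dx = -(c + \dot C^m) \int \tilde v^2\, \m_x(\cdot - ct - C^m)\, dx$, bounded by $(\|c\m_x/\m\|_\infty + |\dot C^m|\|\m_x/\m\|_\infty)\|\tilde v\|^2_{\m_t}$; (b) the linear part $2\langle \tilde v, L_t \tilde v\rangle_{\m_t}$; (c) the rest $2\langle \tilde v, R(t, \tilde v)\rangle_{\m_t}$; (d) the adaptation $2\dot C^m \langle \tilde v, \twp_x(\cdot - ct - C^m)\rangle_{\m_t}$; plus the noise trace term.

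The key interaction is between (b) and (d). A translation change of variable $y = x - ct - C^m(t)$ transports the $L^2(\m)$-spectral gap inequality for $L$ to the time-dependent analogue
\[
2\langle \tilde v, L_t \tilde v\rangle_{\m_t} \leq -2\kappa \|\tilde v\|^2_{\m_t} + 2Z \langle \tilde v, \twp_x(\cdot - ct - C^m)\rangle^2_{\m_t},
\]
while the defining equation for $\dot C^m$ rewrites $\langle \tilde v, \twp_x(\cdot - ct - C^m)\rangle_{\m_t} = -\dot C^m / m$, so that (b) and (d) telescope to $-2\kappa \|\tilde v\|^2_{\m_t} - \tfrac{2(m-Z)}{m^2}(\dot C^m)^2$, whose quadratic piece in $\dot C^m$ is nonpositive under the assumption $m > Z$ and can be dropped. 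For (c) I would use $\|R(t, \tilde v)\|_{\m_t} \leq M_R \|\tilde v\|^2_{\m_t}$ from (\ref{eq:estimateRtilde}) and Cauchy--Schwarz to obtain $2 M_R \|\tilde v\|^3_{\m_t}$; in (a) the a priori bound $|\dot C^m| \leq m \|\twp_x\|_\m \|\tilde v\|_{\m_t}$ converts the $|\dot C^m|$-piece into another cubic term $m\|\twp_x\|_\m\|\m_x/\m\|_\infty \|\tilde v\|^3_{\m_t}$; and the noise trace is dominated by $\sigma^2 \|\m\|_\infty \|\m^{-1}\|_\infty \|\tilde v\|^2_{\m_t}$ via (\ref{noise}).

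Collecting everything, the assumption $\|c\m_x/\m\|_\infty + \sigma^2 \|\m\|_\infty \|\m^{-1}\|_\infty < \kappa$ absorbs the linear contributions and yields
\[
d\|\tilde v(t)\|^2_{\m_t} \leq \left(-\kappa \|\tilde v\|^2_{\m_t} + (2 M_R + m\|\twp_x\|_\m \|\m_x/\m\|_\infty)\|\tilde v\|^3_{\m_t}\right)dt + dM_t,
\]
with $M_t$ a local martingale. The defining choice of $b^*$ makes the deterministic drift nonpositive on $\{\|\tilde v\|_{\m_t} \leq b^*\}$, so $\|\tilde v(t \wedge \tau)\|^2_{\m_{t\wedge\tau}}$ is a nonnegative local supermartingale, hence a supermartingale. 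Doob's maximal inequality then gives $P(\sup_{s \leq t} \|\tilde v(s \wedge \tau)\|^2_{\m_{s\wedge\tau}} \geq (b^*)^2) \leq \|\tilde v(0)\|^2_\m / (b^*)^2$, and since trajectories are continuous and $\|\tilde v(\tau)\|_{\m_\tau} \geq b^*$ on $\{\tau < \infty\}$, letting $t \to \infty$ yields the claim. The main delicate step will be to justify the Itô formula rigorously for a norm weighted by a random, time-dependent measure, and to track the signs in the $(\dot C^m)^2$ telescoping so that the adaptation parameter $m$ does not inject a harmful positive contribution.
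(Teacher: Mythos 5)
Your proposal is correct and follows essentially the same route as the paper's proof: the same It\^o decomposition of $d\|\tilde v(t)\|_{\m_t}^2$, the same cancellation of the $Z\langle \tilde v,\twp_x(\cdot-ct-C^m)\rangle_{\m_t}^2$ term against the adaptation term via $\dot C^m=-m\langle \tilde v,\twp_x(\cdot-ct-C^m)\rangle_{\m_t}$ and $m>Z$, the same cubic bounds $2M_R\|\tilde v\|_{\m_t}^3$ and $m\|\twp_x\|_{\m}\|\m_x/\m\|_\infty\|\tilde v\|_{\m_t}^3$, and the same role for $b^*$. The only cosmetic difference is the final step, where you drop the negative drift and apply Doob's maximal inequality to the stopped nonnegative supermartingale, whereas the paper weights by $e^{\tilde\kappa t}$ and uses optional sampling before letting $t\to\infty$; both yield the identical bound.
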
 

\begin{proof}
For $t \leq \tau$, by It\^{o}'s Lemma,
\begin{align*}
d\|\tilde{v}(t)\|_{\m_t}^2 
& = 2 \langle -\tilde{v}(t) + w\ast (F'(\tilde{u}(t))\tilde{v}(t)) + R(t,\tilde{v}(t)), \tilde{v}(t)\rangle_{\m_t} dt + \Big(2\dot{C}^m(t) \langle \tilde{v}(t), \partial_x \tilde{u}(t)\rangle_{\m_t}\\
& \qquad -  (c+\dot{C}^m(t))\int \tilde{v}^2(t) \partial_x \m_t dx  +  \|\Sigma (\tilde{v}(t))\|^2_{L_2(L^2, L^2(\m_t))}\Big) dt\\
& \qquad + 2 \langle \tilde{v}(t), \Sigma (\tilde{v} (t)) dW_t\rangle_{\m_t},
\end{align*}
where we denote by $\|\Sigma (\tilde{v})\|_{L_2(L^2, L^2(\m_t))}$ the Hilbert-Schmidt norm of $\Sigma (\tilde{v})$: for an orthonormal basis $(e_k)$ of $L^2$,
\[ \|\Sigma (\tilde{v})\|_{L_2(L^2, L^2(\m_t))}^2 = \sum_k \|\Sigma (\tilde{v}) e_k\|_{\m_t}^2 \leq \|\m\|_{\infty} \sigma^2 \|\tilde{v}\|^2 \leq \|\m\|_{\infty} \|\m^{-1}\|_\infty 
\sigma^2 \|\tilde{v}\|^2_{\m_t} .
\]
Since $m>Z$, 
\begin{align*}
& \langle -\tilde{v}(t) + w\ast (F'(\tilde{u}(t))\tilde{v}(t)) , \tilde{v}(t)\rangle_{\m_t} + \dot{C}^m(t) \langle \tilde{v}(t), \partial_x \tilde{u}(t)\rangle_{\m_t} \\
& \hspace{2cm} \leq -\kappa \|\tilde{v}(t)\|_{\m_t}^2 + (Z-m) \langle \tilde{v}(t), \partial_x \tilde{u}(t) \rangle_{\m_t}^2 \leq -\kappa \|\tilde{v}(t)\|_{\m_t}^2.
\end{align*}
Set 
\[
M_t = 2\int_0^t \langle \tilde{v}(s),  \Sigma (\tilde{v}(s))\, dW_s\rangle_{\m_s}.
\]
Using (\ref{eq:estimateRtilde}) and 
\[\bigg|(c+\dot{C}^m(t))\int \tilde{v}^2(t) \partial_x \m_t dx\bigg| \leq (c+ m b^* \|\twp_x\|_{\m}) \Big\|\frac{\m_x}{\m}\Big\|_{\infty} \|\tilde{v}(t)\|_{\m_t}^2, \]
we obtain that
\begin{align*}
d\|\tilde{v}(t)\|_{\m_t}^2 
& \leq \Big( -2\kappa  + \Big(2 M_R + m\|\twp_x\|_{\m} \Big\|\frac{\m_x}{\m}\Big\|_{\infty} \Big) b^* \\
& \qquad + \Big\|c \frac{\m_x}{\m}\Big\|_{\infty}  + \sigma^2\|\m\|_{\infty} \|\m^{-1}\|_{\infty} 
    \Big)\|\tilde{v}(t)\|_{\m_t}^2 dt + dM_t \\
& \leq - \Big(\kappa-\Big\|c \frac{\m_x}{\m}\Big\|_{\infty}  
     - \sigma^2\|\m\|_{\infty}\|\m^{-1}\|_\infty \Big) \|\tilde{v}(t)\|_{\m_t}^2 dt  +  dM_t.
\end{align*}
Set $\tilde{\kappa} := \kappa  - \|c \frac{\m_x}{\m}\|_{\infty} 
   - \sigma^2 \|\m\|_\infty \|\m^{-1}\|_\infty > 0$. 
Applying It\^{o}'s formula to $e^{\tilde{\kappa}t}\|\tilde{v}(t)\|_{\m_t}^2$ we get that
$$
e^{\tilde{\kappa}t}\|\tilde{v}(t)\|_{\m_t}^2 
\leq \|\tilde{v}(0)\|_{\m}^2 + \int_0^t e^{\tilde{\kappa}s} dM_s 
$$
for $t \le \tau$, hence 
$$ 
e^{\tilde{\kappa}\tau\wedge t} \|\tilde{v}(t)\|_{\m_t}^2 
\le \|\tilde{v}(0)\|_{\m}^2  + \int_0^{\tau\wedge t} e^{\tilde{\kappa} s} dM_s \, .
$$
Since $\int_0^t e^{\tilde{\kappa}s} dM_s$ is a continuous (local) martingale and square 
integrable up to the stopping time $\tau$, the optional sampling theorem now yields  
$E\big(\int_0^{\tau\wedge t} e^{\tilde{\kappa}s} dM_s\big) = 0$ and thus  
$$ 
E\big( e^{\tilde{\kappa}\tau\wedge t} \|\tilde{v}(t)\|_{\m_t}^2 \big) 
\le  \|\tilde{v}(0)\|_{\m}^2 \, .  
$$
Taking the limit $t\to\infty$ we finally arrive at 
\begin{align*} 
(b^\ast )^2 P (\tau < \infty ) 
& \le \lim_{t\to\infty} E\big( e^{\tilde{\kappa}\tau \wedge t} \|\tilde{v}(t)\|_{\m_t}^2 \big) 
\le \|\tilde{v}(0)\|_{\m}^2 
\end{align*}
which implies the assertion. 
\end{proof}

\begin{remark}\leavevmode
\begin{enumerate}
	\item The theorem tells us that the difference $\tilde{v}$ between the stochastic solution and the adapted traveling wave stays small uniformly in $t$ on the set $\left\{\tau=\infty\right\}$. The probability of this set can be controlled by the initial difference $\|u(0)-\twp\|$ and the noise amplitude $\sigma$. 
	\item Note that also in Theorem \ref{thm:stabilityL2m} $|c|$ is required to be `small enough' since we assume that $\|c \frac{\m_x}{\m}\|_{\infty} < \kappa$. 
\end{enumerate}
\end{remark}

\bigskip 
\noindent 
{\bf Acknowledgement:} We thank the referee for a very careful reading of the manuscript and for 
constructive and clarifying remarks.


\end{document}